\documentclass[onefignum,onetabnum]{siamart171218}

\usepackage{microtype}
\usepackage{graphicx}
\usepackage{subfigure}
\usepackage{booktabs} 
\usepackage{bm}
\usepackage{xcolor}
\usepackage{algorithm}
\usepackage{algorithmic}

\usepackage[utf8]{inputenc}
\usepackage[english]{babel}

\def\A{{\mathbf A}}

\def\B{{\mathbf B}}
\def\cbf{{\mathbf c}}

\def\e{{\mathbf e}}

\def\I{{\mathbf I}}
\def\L{{\mathbf L}}

\def\M{{\mathbf M}}

\def\p{{\mathbf p}}

\def\q{{\mathbf q}}

\def\s{{\mathbf s}}

\def\u{{\mathbf u}}

\def\v{{\mathbf v}}

\def\w{{\mathbf w}}

\def\x{{\mathbf x}}

\def\y{{\mathbf y}}

\def\z{{\mathbf z}}


\usepackage{lipsum}
\usepackage{amsfonts}
\usepackage{graphicx}
\usepackage{epstopdf}
\usepackage{algorithmic}
\ifpdf
  \DeclareGraphicsExtensions{.eps,.pdf,.png,.jpg}
\else
  \DeclareGraphicsExtensions{.eps}
\fi


\newsiamremark{remark}{Remark}
\newsiamremark{hypothesis}{Hypothesis}
\crefname{hypothesis}{Hypothesis}{Hypotheses}
\newsiamthm{claim}{Claim}

\headers{Bregman Monotone Operator Splitting}{K. Niwa and W. B. Kleijn}

\title{Bregman Monotone Operator Splitting}

\author{Kenta Niwa 
\thanks{
NTT Media Intelligence Laboratories, Japan and Victoria University of Wellington, New Zealand.
(\email{niwa.kenta@lab.ntt.co.jp}). }
\and W. Bastiaan Kleijn 
\thanks{
Victoria University of Wellington, New Zealand
(\email{bastiaan.kleijn@ecs.vuw.ac.nz}).}
}

\usepackage{amsopn}


\ifpdf
\hypersetup{
  pdftitle={Bregman monotone operator splitting},
  pdfauthor={K. Niwa and Bastiaan W. Kleijn}
}
\fi


\externaldocument{ex_supplement}


\begin{document}

\maketitle

\begin{abstract}
Monotone operator splitting is a powerful paradigm that facilitates parallel processing for optimization problems where the cost function can be split into two convex functions. We propose a generalized form of monotone operator splitting based on Bregman divergence. We show that an appropriate design of the Bregman divergence leads to faster convergence than conventional splitting algorithms. The proposed Bregman monotone operator splitting (B-MOS) is applied to an application to illustrate its effectiveness. B-MOS was found to significantly improve the convergence rate. 

\end{abstract}

\begin{keywords}
Monotone operator splitting (MOS), Bregman divergence, Newton method, accelerated gradient descent (AGD)
\end{keywords}

\begin{AMS}
90C25, 90C20, 68Q32
\end{AMS}

\section{Introduction}\label{sec:introduction}

Mathematical optimization is commonly used in a wide range of applications including image classification, speech recognition, and natural language processing. In recent years, the performance of optimization algorithms has improved drastically through the use of big data and large computing resources. Although we cannot provide a detailed overview of optimization theory because of the breadth of the field, we can provide a rough categorization on the basis of the three perspectives used in \cite{Liu2017Recent}.

The first perspective is that of the problem formulation. Different formulations of the problem are often possible and lead to different solution methods. Optimization problems are commonly formulated as a cost minimization subject to a set of constraints. Linearly constrained minimization forms are particularly ubiquitous. Moreover, a \textit{dual} formulation \cite{fenchel1949conjugate} forms an alternative that makes the optimization problem more tractable. A famous example dual formulation is the Lagrangian dual ascent problem \cite{uzawa1958gradient}, which is described in Sec. \ref{sec:2_1}. For certain optimization problems, the cost function can be formulated as a summation of components. This is the problem formulation used in \textit{monotone operator splitting (MOS)} e.g., \cite{ryu2016primer, bauschke2017convex}.

The second perspective is that of the solver method. In constructing a solver that applies to the defined problem, its \textit{convergence rate} is an important factor. If we use a deterministic solution method, possible approaches are first order gradient descent (GD) \cite{cauchy1847methode}, accelerated gradient descent (AGD) \cite{nesterov1983method}, and the Newton method and quasi-Newton method \cite{shanno1970conditioning}. To handle large amounts of data, mini-batch based \textit{stochastic optimization} was introduced \cite{robbins1951stochastic}. A method to estimate the convergence rate of stochastic optimization was provided in \cite{bousquet2008tradeoffs}. Recent algorithms, e.g., \cite{johnson2013accelerating, xiao2014proximal, roux2012stochastic, defazio2014saga, suzuki2014stochastic}, are often combinations of a particular problem form and a particular solver. For example, stochastic dual coordinate ascent (SDCA) \cite{shalev2013stochastic} applies first order stochastic gradient descent (SGD) to a risk minimization problem \cite{vapnik1998statistical}, whereas stochastic dual Newton ascent (SDNA) \cite{qu2016sdna} applies second order SGD. If the cost function admits a formulation as a sum of two suitable convex functions, then this naturally leads to MOS approaches, e.g., \cite{ryu2016primer, bauschke2017convex}. The alternating direction method of multipliers (ADMM) \cite{gabay1976dual} is an example of MOS that applies Douglas-Rachford splitting \cite{douglas1956numerical} to the Lagrangian dual ascent problem.

The third perspective is how to run the solver effectively over multiple processors. Recent progress on parallel computing architectures in the context of cloud computing and graphics processing unit (GPU) clusters has resulted in a large research effort towards running solvers in parallel on many processing nodes, usually for big data. Numerous parallel computing methods have been proposed. Pioneering methods include parallelized SGD \cite{zinkevich2010parallelized, zhang2012communication}, the hogwild! algorithm \cite{recht2011hogwild}, elastic averaging SGD \cite{zhang2015deep}, and communication-efficient coordinate ascent (COCOA) \cite{jaggi2014communication}, which is a parallelization form of SDCA. The class of MOS based methods is particularly attractive for running over multiple processors as MOS naturally facilitates parallel computation. Many parallel algorithms based on MOS are variants of ADMM \cite{wei2012distributed, zhang2014asynchronous}. Although ADMM is effective, its convergence rate is often relatively slow because it is based on Douglas-Rachford splitting. The primal-dual method of multipliers (PDMM) \cite{zhang2017distributed, sherson2017derivation} inherently converges faster as it is based on Peaceman-Rachford splitting \cite{peaceman1955numerical}.

In this paper, we focus on MOS methods. Several MOS solvers are well known, such as Peaceman-Rachford splitting \cite{peaceman1955numerical}, Douglas-Rachford splitting \cite{douglas1956numerical}, forward-backward splitting \cite{passty1979ergodic} and Davis-Yin three-operator splitting \cite{davis2017three}. Their variable update procedures are basically composed of operators such as the resolvent and Cayley operators as summarized in \cite{ryu2016primer, bauschke2017convex}. The convergence rates of MOS solvers generally follow from the contractive property of the aforementioned operators. Penalty terms based on the squared $L_{2}$ norm are often used in the variable update cost. This fact indicates that the variables are updated with a pre-determined step-size in a \textit{Euclidean metric} and its convergence rate corresponds to that of first order GD. This suggests that we may obtain a faster convergence rate with MOS based solvers.

Our contribution is a generalization of MOS solvers to using Bregman divergence \cite{bregman1967relaxation} for obtaining faster convergence rates. For conventional MOS methods, the idea of generalization using Bregman divergence has been used in the past. For example, the resolvent operator was generalized to using the Bregman divergence \cite{bauschke2003bregman} and forward-backward splitting was generalized Bregman divergence \cite{van2017forward}. However, this method differs from our method in that the cost function is modified/limited to using the Bregman divergence. In our generalized MOS solvers, such cost modification/limitation is not required and other MOS algorithms, such as Peaceman-Rachford splitting and Douglas-Rachford splitting, are applicable. To make the convergence rate fast, the Bregman divergence must be designed appropriately. As has been discussed for the Newton method, convergence rate improvements are basically due to including higher order convexity, such as the second order gradient (Hessian) in a convex cost \cite{luenberger1973introduction, battiti1992first}. 
Therefore, it is important to investigate the properties of the metric that relate to
convergence rate. We provide a design method for the Bregman divergence metric that leads to fast convergence of
MOS solvers. By means of a convergence rate analysis and numerical experiments, we show
how the convergence rates are affected by the design of the Bregman divergence.

In Sec. \ref{sec:2}, the theory of the new Bregman MOS (B-MOS) is described in the context of deterministic optimization. After Bregman MOS algorithms are constructed, their convergence rates are  predicted. We explain how to design Bregman divergence to achieve fast convergence. In Sec. \ref{sec:3}, the B-MOS solver is applied to a constrained minimization problem to illustrate the effectiveness of B-MOS as an example.

\section{THEORY}
\label{sec:2}

In this section, we generalize the conventional monotone operator splitting algorithms to
use Bregman divergence. The resulting formulation has additional degrees of freedom that can
be optimized to minimize bounds on convergence rate.
After defining the problem, we derive the basic algorithms in Sec. \ref{sec:2_2}. 
In Sec. \ref{sec:2_3}, the fast convergence condition for the Bregman divergence metric is investigated, and 
an implementation of Bregman divergence is explained in Sec. \ref{sec:2_4}.

\subsection{Problem Definition}
\label{sec:2_1}

We consider the problem of finding an infimum of a convex closed proper (CCP) function 
that can be split into two CCP functions as $G(\w) = G_{1}(\w) + G_{2}(\w)$: 
\begin{align}
\inf_{\w}  G_{1}(\w) + G_{2}(\w),
\label{eq:mintwoccp}
\end{align}
where $\w \in \mathbb{R}^{m}$ is the latent variable to be optimized and $G_{i}: \mathbb{R}^{m} \rightarrow \mathbb{R} \cup \{ \infty \}, \hspace{3pt} (i \hspace{-2pt} = \hspace{-2pt} 1, 2)$. 
A fixed point $\w^{\ast}$ can be found by requiring that the subdifferential of
(\ref{eq:mintwoccp}) includes the zero vector,
\begin{align}
\mathbf{0} \in \partial G_{1}(\w^{\ast}) + \partial G_{2}(\w^{\ast}),
\label{eq:subdif_zero}
\end{align}
where $\partial$ is the subdifferential operator \cite{tyrrell1970convex}, and 
$\in$ reflects that its output can be multi-valued. 
As each $G_i (\w)$ is a CCP function, $\partial G_{i} (\w)$ is maximally monotone \cite{minty1960monotone}. 

To show that various relevant problems are of the form (\ref{eq:mintwoccp}), we provide two examples. 

\vspace{5pt}

\noindent
\textbf{Ex. 1: Constrained minimization problem}

\vspace{3pt}

Let us suppose the constrained minimization problem is composed of a CCP loss term
$H_{1}: \mathbb{R}^{m} \rightarrow \mathbb{R} \cup \{ \infty \}$ 
and a CCP regularization term $H_{2}: \mathbb{R}^{m} \rightarrow \mathbb{R} \cup \{ \infty \}$ as
\begin{align}
\inf_{\p} \hspace{2pt} H_{1}(\p) \hspace{10pt} \mathrm{s.t.} \hspace{3pt} H_{2} (\p) \leq 0, 
\label{eq:problem_ex1}
\end{align}
where $\p \in \mathbb{R}^{m}$ is the latent variable. 
It is well known that solving the problem \eqref{eq:problem_ex1} is 
equivalent to solving the following problem \cite{tibshirani1996regression}:
\begin{align}
\inf_{\p}  H_{1}(\p) + \mu H_{2} (\p), 
\label{eq:problem_ex1_reform}
\end{align}
where $\mu > 0$. 
By replacing $\p \rightarrow  \w$, and setting $G_{1}(\w) = H_{1}(\w), G_{2}(\w) = \mu H_{2} (\w)$, 
the constrained minimization problem \eqref{eq:problem_ex1_reform} is of the form \eqref{eq:mintwoccp}. 

\vspace{5pt}

\noindent
\textbf{Ex. 2: Lagrangian dual ascent problem}

\vspace{3pt}

For another constrained minimization problem, 
let us suppose the cost function to be minimized is composed of two CCP functions, 
$H_{1}: \mathbb{R}^{p} \rightarrow \mathbb{R} \cup \{ \infty \}$ and 
$H_{2}: \mathbb{R}^{q} \rightarrow \mathbb{R} \cup \{ \infty \}$, and let the variables be linearly constrained as
\begin{align}
\inf_{\p, \q}  H_{1}(\p) + H_{2}(\q) \hspace{10pt} \rm{s.t.} \hspace{3pt} \A \p + \B \q = \cbf, 
\label{eq:problem_ex2}
\end{align}
where the variables are $\p \in \mathbb{R}^{p}$, $\q  \in \mathbb{R}^{q}$ 
and where $\A \in \mathbb{R}^{m \times p}, \B
\in \mathbb{R}^{m \times q}, \cbf \in \mathbb{R}^{m}$ specify the linear constraint parameters that relate the variables. 
Given the Lagrangian function $\mathcal{L}(\p, \q, \bm{\lambda})$
\begin{align}
\mathcal{L}(\p, \q, \bm{\lambda}) = H_{1}(\p) + H_{2}(\q) + \left< \bm{\lambda}, \cbf - \A \p - \B \q \right>. 
\label{eq:problem_ex2_lagrangian}
\end{align}
When the dual problem exists \cite{fenchel1949conjugate}, 
solving it instead of the primal problem 
is a natural strategy. 
The dual problem takes the form:
\begin{subequations}
\begin{align}
\sup_{\bm{\lambda}} \inf_{\p, \q} \mathcal{L}(\p, \q, \bm{\lambda})
& = \sup_{\bm{\lambda}} \left( - H^{\star}_{1}(\A^{\textrm{T}} \bm{\lambda} ) 
- H^{\star}_{2}(\B^{\textrm{T}} \bm{\lambda} )
+ \left< \bm{\lambda}, \cbf \right>   \right) 
\label{eq:Ldualproblem_ascent}
\\
& = - \inf_{\bm{\lambda}} \left( H^{\star}_{1}(\A^{\textrm{T}} \bm{\lambda} )  
+ H^{\star}_{2}(\B^{\textrm{T}} \bm{\lambda} ) - \left< \bm{\lambda}, \cbf \right> \right), 
\label{eq:Ldualproblem}
\end{align}
\end{subequations}
where $\bm{\lambda} \in \mathbb{R}^{m}$ is a dual variable,
$^{\textrm{T}}$ denotes the transposition, and $H^{\star}_{i}: \mathbb{R}^{m} \rightarrow \mathbb{R} \cup \{ \infty \}$ 
is the convex conjugate (the Legendre transformation for the scalar case) of $H_{i}$ \cite{fenchel1949conjugate} as
\begin{align}
& H^{\star}_{1}( \A^{\textrm{T}} \bm{\lambda} ) = \sup_{\p} \left( \left< \bm{\lambda}, \A \p \right> - H_{1}(\p) \right), 
\label{eq:dualconjugate1} \\
& H^{\star}_{2}(\B^{\textrm{T}} \bm{\lambda} ) = \sup_{\q} \left( \left< \bm{\lambda}, \B \q \right> - H_{2}(\q) \right). 
\label{eq:dualconjugate2}
\end{align}
By replacing $\bm{\lambda} \rightarrow \w$, and setting $G_{1}(\w)
= H^{\star}_{1}(\A^{\textrm{T}} \w ), 
G_{2}(\w) = H^{\star}_{2}(\B^{\textrm{T}} \w ) - \left< \bm{\lambda}, \cbf \right> $, the Lagrangian dual ascent problem, which reformulates \eqref{eq:Ldualproblem_ascent} into the minimization (\ref{eq:Ldualproblem}), 
is of the form (\ref{eq:mintwoccp}).

\subsection{Bregman Monotone Operator Splitting (B-MOS)}
\label{sec:2_2}

In this section, we generalize MOS solvers with the aim to obtain faster convergence. 
MOS methods have been studied as solvers for the problem
(\ref{eq:subdif_zero}) and are summarized well in e.g., \cite{ryu2016primer, bauschke2017convex}. 
In the present paper, we focus on three well-known monotone operator splitting methods: 
namely Peaceman-Rachford (P-R) splitting \cite{peaceman1955numerical}, Douglas-Rachford
(D-R) splitting \cite{douglas1956numerical} and forward-backward (F-B) splitting \cite{passty1979ergodic}.

We generalize the conventional Euclidean distance metric used in MOS methods to the Bregman divergence (B-MOS). The motivation for the generalization is that our Bregman divergence based approach can be used to obtain significantly faster convergence rates than conventional Euclidean distance based solvers. (We will explain how to obtain fast convergence rates on the basis of Bregman divergence in Sec. \ref{sec:2_3} and \ref{sec:2_4}. )

We first define Bregman divergence together with a property relevant in the present context. The Bregman divergence of
a first point $\w \in \mathbb{R}^{m} $ and a second point $\z \in  \mathbb{R}^{m}$ \cite{bregman1967relaxation} is defined as
\begin{align}
B_{D}(\w \hspace{-2pt} \parallel \hspace{-2pt} \z) = D(\w)  -  D(\z) -  \left< \nabla D(\z), \w - \z \right>,
\label{eq:bregman}
\end{align}
where $\nabla$ denotes the gradient operator and where the definition of Bregman
divergence allows any continuously differentiable strictly convex function for $D$, e.g., \cite{boyd2004convex}. 
An important property for $D$ is that if it is limited to satisfy $\nabla D (\bm{0})
\hspace{-2pt}  = \hspace{-2pt} \bm{0}$, which is equivalent to $\nabla D^{-1} (\bm{0}) \hspace{-1pt} = \hspace{-1pt} \bm{0}$,  
then the fixed point specified by \eqref{eq:subdif_zero} is unaffected by the application of $\nabla D^{-1}$:
\begin{align}
&\bm{0} \in \partial G_{1} (\w^{\ast} ) + \partial G_{2}  (\w^{\ast} ), \notag \\
&\nabla D^{-1} ( \bm{0} ) \in \nabla D^{-1} \left( \partial G_{1} (\w^{\ast} ) + \partial G_{2}  (\w^{\ast} ) \right), \notag \\
&\bm{0} \in \nabla D^{-1} \partial G_{1} (\w^{\ast} ) + \nabla D^{-1} \partial G_{2}  (\w^{\ast} ), 
\hspace{10pt} ( \textrm{if} \hspace{3pt} \nabla D ( \bm{0} ) = \bm{0} ). 
\label{eq:D0_requirements} 
\end{align}
Note that for $D(\w) \hspace{-2pt} = \hspace{-2pt} \frac{1}{2 \kappa} \| \w \|_{2}^{2}
\hspace{2pt} (\kappa \hspace{-2pt} > \hspace{-2pt} 0)$, 
the Bregman divergence reduces to the Euclidean distance.

Next, we define a number of operators that we will need below. Some of
these operators are well-known. We define the $D$-forward step as 
\begin{align}
F_{i} &= I - \nabla D^{-1} \partial G_{i}, 
\label{ref:dforwardstep}
\end{align}
where $\nabla D^{-1}$ is applied to the subgradient operator $\partial G_{i}$ to modify the metric of variable space. 
Furthermore, we define the $D$-resolvent operator \cite{bauschke2003bregman} (or $D$-backward step) 
$R_{i} \hspace{3pt} (i \hspace{-2pt} = \hspace{-2pt} 1,2)$, 
the new $D$-Cayley operator $C_{i}\hspace{3pt} (i \hspace{-2pt} = \hspace{-2pt} 1,2)$, 
and the averaged operator $A_{J}$ with respect to the operator $J$, e.g., \cite{ryu2016primer, bauschke2017convex}, as
\begin{align}
R_{i} &= (I + \nabla D^{-1} \partial G_{i})^{-1}
= (\nabla D + \partial G_{i})^{-1} \nabla D, 
\label{ref:dResolventoperator_reform} \\
%
%
%
C_{i} &= R_{i} F_{i} \notag \\
&= (I + \nabla D^{-1} \partial G_{i})^{-1} (I - \nabla D^{-1} \partial G_{i}) \label{ref:dCayleyoperator1} \\
&= 2 (I + \nabla D^{-1} \partial G_{i})^{-1} - (I + \nabla D^{-1} \partial G_{i})^{-1} (I + \nabla D^{-1} \partial G_{i}) \notag \\
&= 2 (I + \nabla D^{-1} \partial G_{i})^{-1} - I \notag \\
&= 2 R_{i} - I, 
\label{ref:dCayleyoperator2} \\
A_{J} &= (1-\alpha) I + \alpha J = I + \alpha ( J - I ),
\label{ref:averagedoperator}
\end{align}
where $\alpha \hspace{-1pt} \in \hspace{-1pt} (0,1)$. 
When the Bregman divergence is based on the Euclidean distance, i.e., 
$D(\w) = \hspace{-1pt} \frac{1}{2 \kappa} \| \w \|_{2}^{2}$, 
the well-known (Euclidean) resolvent operator is then obtained: 
$R_{i} = (I + \kappa \partial G_{i} )^{-1}$, 
and the metric of $F_{i}$ and $C_{i}$ is then also Euclidean (e.g., \cite{ryu2016primer}). 
The properties of $R_{i}, C_{i}$ and $F_{i}$ are investigated in more detail in Appendix \ref{sec:appendix}.

With the above definitions and properties, we are now able to derive Peaceman-Rachford splitting generalized using Bregman divergence 
(Bregman Peaceman-Rachford splitting) by reformulating \eqref{eq:D0_requirements}, which is the fixed-point condition 
assuming that $\nabla D ( \bm{0} ) \hspace{-2pt} = \hspace{-2pt} \bm{0}$, as 
\begin{align}
\mathbf{0} &\in \nabla D^{-1} \partial G_{2} (\w) +  \nabla D^{-1} \partial G_{1} (\w), \notag \\
\mathbf{0} &\in ( I + \nabla D^{-1} \partial G_{2} ) (\w) - ( I - \nabla D^{-1} \partial G_{1} ) (\w), 
\label{eq:fixedpoint00}
\end{align}
where $I$ and $^{-1}$ are the identity operator and the inverse operator, respectively. Since $D$ is a strictly convex function, $\nabla D$ and its inverse $\nabla D^{-1}$ are monotone operators that have a unique relation between input and output vectors. 
By setting $\w \hspace{-2pt} \in
\hspace{-2pt} R_{1} (\z)$, the fixed point condition \eqref{eq:fixedpoint00} can be written as
\begin{align}
\mathbf{0} &\in ( I + \nabla D^{-1} \partial G_{2} ) R_{1} (\z) - ( I - \nabla D^{-1} \partial G_{1} ) R_{1} (\z), \notag \\
\mathbf{0} &\in R_{1} (\z) - R_{2} C_{1} (\z), \notag \\
\mathbf{0} &\in \frac{1}{2} (C_{1} + I) (\z) - \frac{1}{2} (C_{2} + I) C_{1} (\z). \notag 
\end{align}
Hence, we obtain the condition for a fixed point
\begin{align}
\z \in C_{2} C_{1} (\z).
\label{eq:PRsplit}
\end{align}

\begin{algorithm}[t]
   \caption{Bregman Peaceman-Rachford Splitting}
   \label{alg:BFPOS_PR}
\begin{algorithmic}
\STATE
{Initialization of $\z^{0}$} \\
\FOR{$t=0, \ldots, T - 1$}
\STATE
$\w^{t+1} = \arg \min_{\w} \left( G_{1}(\w) + B_{D}(\w \hspace{-2pt} \parallel \hspace{-2pt} \z^{t}) \right)$, \\
$\x^{t+1} = 2 \w^{t+1} - \z^{t}$, \\
$\y^{t+1} = \arg \min_{\y} \left( G_{2}(\y) + B_{D}(\y \hspace{-2pt} \parallel \hspace{-2pt} \x^{t+1})  \right)$, \\
$\z^{t+1} = 2 \y^{t+1} - \x^{t+1}$
\ENDFOR
\end{algorithmic}
\end{algorithm}
%

Appendix \ref{sec:appendix} shows that the $D$-Cayley operator $C_i$ is nonexpansive, i.e., 
it is Lipschitz continuous with the Lipschitz constant 1. 
The iterative application of \eqref{eq:PRsplit} generates a Cauchy sequence, and the
iterations follow Banach-Picard fixed-point iterations, e.g., \cite{berinde2007iterative}.

The iteration specified by \eqref{eq:PRsplit} can be decomposed into simpler steps by introducing additional auxiliary variables $\x \hspace{-1pt} \in \hspace{-1pt} \mathbb{R}^{m}$ and $\y \hspace{-1pt} \in \hspace{-1pt} \mathbb{R}^{m}$:
\begin{align}
&\hspace{-6pt} \w^{t+1} \hspace{-1pt} = \hspace{-1pt} R_{1} (\z^{t}) \hspace{-1pt} = \hspace{-1pt} 
(I + \nabla D^{-1} \partial G_{1} )^{-1} (\z^{t}), \hspace{-2pt}
\label{eq:PR01} \\
&\hspace{-6pt} \x^{t+1} \hspace{-1pt} = \hspace{-1pt} C_{1} (\z^{t}) \hspace{-1pt} = \hspace{-1pt} 
(2 R_{1} \hspace{-1pt} - \hspace{-1pt} I ) (\z^{t}) \hspace{-1pt} = \hspace{-1pt} 2 \w^{t+1} \hspace{-1pt} - \hspace{-1pt} \z^{t},
\hspace{-2pt}
\label{eq:PR02} \\
&\hspace{-6pt} \y^{t+1} \hspace{-1pt} = \hspace{-1pt} R_{2} (\x^{t+1}) \hspace{-1pt} = \hspace{-1pt} 
(I + \nabla D^{-1} \partial G_{2} )^{-1} (\x^{t+1}), \hspace{-2pt}
\label{eq:PR03} \\
&\hspace{-6pt} \z^{t+1} \hspace{-1pt} = \hspace{-1pt} C_{2} (\x^{t+1}) \hspace{-1pt} = \hspace{-1pt} 
(2 R_{2} \hspace{-1pt} - \hspace{-1pt} I ) (\x^{t+1}) \hspace{-1pt} = \hspace{-1pt} 2 \y^{t+1} \hspace{-1pt} - \hspace{-1pt} \x^{t+1}, \hspace{-2pt}
\label{eq:PR04}
\end{align}
where (\ref{eq:PR01}) corresponds to the Bregman proximal point algorithm, e.g., \cite{eckstein1993nonlinear}. 
This can be seen by first writing
\begin{align}
\w &\in R_{1} (\z) , \notag \\
\w &\in ( I + \nabla D^{-1} \partial G_{i} )^{-1} (\z) , \notag \\
( I + \nabla D^{-1} \partial G_{1} ) (\w) &\in \z, \notag \\
\mathbf{0} &\in \nabla D^{-1} \partial G_{1}(\w) + \w - \z, \notag \\
\mathbf{0} &\in \partial G_{1}(\w) + \nabla D(\w) - \nabla D(\z). 
\label{eq:bregmanproximal0}
\end{align}
Assuming the minimum exists, then the integral of \eqref{eq:bregmanproximal0} gives
\begin{align}
\w^{t+1} &=   \arg \min_{\w} \left( G_{1}(\w) + B_{D}(\w \hspace{-2pt} \parallel \hspace{-2pt} \z^{t}) \right). 
\label{eq:bregmanresolvent_integral1}
\end{align}
From \eqref{eq:bregmanresolvent_integral1}, 
we see that the metric of the cost function is generalized by using the Bregman divergence. 
By using \eqref{ref:dCayleyoperator2}, the variable update using the $D$-Cayley operator
can be obtained with \eqref{eq:PR02}. However, to show that the  
 update cost is based on the Bregman divergence, we rewrite it with another formulation.  By using
\eqref{ref:dCayleyoperator1}, the update procedure $\x \in C_{1}(\z)$ can be reformulated as  
\begin{align}
\x &\in (I + \nabla D^{-1} \partial G_{1} )^{-1} (I - \nabla D^{-1} \partial G_{1} ) (\z), \notag \\
(I + \nabla D^{-1} \partial G_{1} ) (\x) &\in (I - \nabla D^{-1} \partial G_{1} ) (\z), \notag \\
\mathbf{0} & \in \x - \z + \nabla D^{-1} \partial G_{1}(\x) + \nabla D^{-1} \partial G_{1}(\z), \notag \\
\mathbf{0} & \in \nabla D(\x) - \nabla D(\z) + \partial G_{1}(\x) + \partial G_{1}(\z). 
\label{eq:bregmanCayleyimp0}
\end{align}
Assuming that the minimum exists, then the integral of \eqref{eq:bregmanCayleyimp0} gives
\begin{align}
\x^{t+1} &=  \arg \min_{\x} \left( G_{1}(\x) + G_{1}(\z^{t}) + \left< \partial G_{1}(\z^{t}), \x - \z^{t} \right> + B_{D}(\x \hspace{-1pt} \parallel \hspace{-1pt} \z^{t}) \right). 
\label{eq:bregmanCayley1}
\end{align}
\eqref{eq:bregmanCayley1} also shows that the cost metric is generalized to a Bregman divergence. 
However, since the vector update with this procedure gives the same result as \eqref{eq:PR02}, 
we use the simple form \eqref{eq:PR02} for the implementation of the $D$-Cayley operator hereafter.  
The resulting Bregman Peaceman-Rachford splitting algorithm is summarized in \textbf{Algorithm} \ref{alg:BFPOS_PR}.

\begin{algorithm}[t]
   \caption{Bregman Douglas-Rachford Splitting}
   \label{alg:BFPOS_DR}
\begin{algorithmic}
\STATE
{Initialization of $\z^{0}$} \\
\FOR{$t=0, \ldots, T - 1$}
\STATE
$\w^{t+1} = \arg \min_{\w} \left( G_{1}(\w) + B_{D}(\w \hspace{-2pt} \parallel \hspace{-2pt} \z^{t}) \right)$, \\
$\x^{t+1} = 2 \w^{t+1} - \z^{t}$, \\
$\y^{t+1} = \arg \min_{\y} \left( G_{2}(\y) + B_{D}(\y \hspace{-2pt} \parallel \hspace{-2pt} \x^{t+1}) \right)$, \\
$\z^{t+1} = \z^{t} + 2 \alpha (\y^{t+1} - \w^{t+1})$
\ENDFOR
\end{algorithmic}
\end{algorithm}

Bregman Douglas-Rachford splitting, a generalization of Douglas-Rachford splitting,
is obtained by introducing the averaged operator into (\ref{eq:PRsplit}):
\begin{align}
\z  &\in  \alpha C_{2} C_{1} (\z) + (1 - \alpha) \z, 
\label{eq:DRsplit} \\
\z &\in A_{C_{2} C_{1}} (\z). 
\label{eq:DRsplit2}
\end{align}
\eqref{eq:DRsplit2} can be decomposed into
(\ref{eq:PR01})-(\ref{eq:PR03}), augmented by
\begin{align}
&\z^{t+1} = \alpha (2 \y^{t+1} - \x^{t+1}) + (1 - \alpha) \z^{t} = \z^{t} + 2 \alpha (\y^{t+1} - \w^{t+1}).
\label{eq:DR04}
\end{align}
When $J$ is a nonexpansive operator, 
$A_{J}$ is also a nonexpansive operator, e.g., \cite{bauschke2017convex}. 
Therefore, the Bregman Douglas-Rachford splitting algorithm is  Banach-Picard fixed point iteration. 
Its update rule is summarized in \textbf{Algorithm} \ref{alg:BFPOS_DR}.

%
\begin{algorithm}[t]
   \caption{Bregman Forward-Backward Splitting}
   \label{alg:BFPOS_FB}
\begin{algorithmic}
\STATE
{Initialization of $\w^{0}$} \\
\FOR{$t=0, \ldots, T - 1$}
\STATE
$\w^{t+1} = \arg \min_{\w} \left( G_{2}(\w) + G_{1}(\w^{t})  + \left< \partial G_{1}(\w^{t}), \w - \w^{t} \right> + B_{D}(\w \hspace{-1pt} \parallel \hspace{-1pt} \w^{t}) \right)$
\ENDFOR
\end{algorithmic}
\end{algorithm}

%
%
%

Apart from the Banach-Picard fixed point iterations, 
reformulating (\ref{eq:subdif_zero}) leads forward-backward splitting generalized using the Bregman divergence (Bregman forward-backward splitting):
\begin{align}
\mathbf{0} &\in \nabla D^{-1} \partial G_{2} (\w) + \nabla D^{-1} \partial G_{1} (\w), 
\notag \\
\mathbf{0} &\in (I + \nabla D^{-1} \partial G_{2}) (\w) - (I - \nabla D^{-1} \partial G_{1}) (\w), 
\notag \\
( I  + \nabla D^{-1} \partial G_{2}) (\w) &\in (I  - \nabla D^{-1} \partial G_{1}) (\w), 
\notag \\
\w &\in (I + \nabla D^{-1} \partial G_{2})^{-1}
(I  -  \nabla D^{-1} \partial G_{1}) (\w), 
\notag \\
\w &\in R_{2} F_{1} (\w). 
\label{eq:FBsplit}
\end{align}
Therefore, the update procedure is given by 
\begin{align}
&\x^{t+1} = F_{1} (\w^{t}) = ( I - \nabla D^{-1} \partial G_{1} ) (\w^{t}), \label{eq:FBsplitting_01} \\ 
&\w^{t+1} = R_{2} (\x^{t+1}). 
\label{eq:FBsplitting_02}
\end{align}
The procedure \eqref{eq:FBsplitting_01}-\eqref{eq:FBsplitting_02} can be summarized by
\begin{align}
\w^{t+1} &= R_{2} F_{1} (\w^{t}) \notag \\
&= \arg \min_{\w} \hspace{-2pt} \left( G_{2}(\w) \hspace{-2pt} + \hspace{-2pt} G_{1}(\w^{t}) \hspace{-2pt} + \hspace{-2pt} \left< \partial G_{1}(\w^{t}), \w \hspace{-2pt} - \hspace{-2pt} \w^{t} \right> \hspace{-2pt} + \hspace{-2pt} B_{D}(\w \hspace{-3pt} \parallel \hspace{-3pt} \w^{t}) \right)\hspace{-2pt}. 
\label{eq:FBsplitting3}
\end{align}
The Bregman forward-backward splitting algorithm is summarized in \textbf{Algorithm} \ref{alg:BFPOS_FB}. 
For the MOS algorithms we derived, the metric generalization using Bregman divergence was achieved by replacing 
$\partial G_{i}$ by $\nabla D^{-1} \partial G_{i}$. 
The metric of other MOS algorithms such as forward-backward-forward splitting \cite{tseng2000modified} 
and Davis-Yin three-operator splitting \cite{davis2017three} can be similarly generalized using Bregman divergence. 
Since this does not affect our main conclusions, their derivations are not described in this paper. 
%

%
%
%

In this subsection, several MOS algorithms were generalized using the Bregman divergence (B-MOS). 
To exploit this generalization and make the algorithms converge faster, an appropriate Bregman divergence must be designed. 
The step for designing an appropriate metric of it is provided in the next subsection.

\subsection{Bregman Divergence Design for Fast Convergence Rate}
\label{sec:2_3}

We now introduce the main idea of how to design Bregman divergence for fast convergence. As explained in Sec. \ref{sec:2_2}, the metric of variable space was generalized by using Bregman divergence instead of the Euclidean distance used in the traditional MOS solvers. We first investigate how the cost property is modified by applying $\nabla D^{-1}$ to $\partial G_{i}$ because this will provide us with an indication on how to design Bregman divergence for fast convergence.

As illustrated in Appendix \ref{sec:appendix}, we assume that the properties of $G_{i}$ are represented by using any different two points $\w$ and $\z$, given
\begin{align}
\gamma_{\textrm{LB}, i}
\| \w \hspace{-1pt} - \hspace{-1pt} \z  \|_{2}
\leq 
\| \partial G_{i}(\w) - \partial G_{i}(\z) \|_{2}
\leq 
\gamma_{\textrm{UB}, i} \| \w \hspace{-1pt} - \hspace{-1pt} \z \|_{2},
\label{eq:G_2bounds_sec2}
\end{align}
where $0 \hspace{-2pt} \leq \hspace{-2pt} \gamma_{\textrm{LB}, i} \hspace{-2pt} \leq \hspace{-2pt} \gamma_{\textrm{UB}, i} \hspace{-2pt} < \hspace{-2pt} +\infty$. 
Applying $\nabla D^{-1}$ to $\partial G_{i}$, as in \eqref{ref:dforwardstep}, \eqref{ref:dResolventoperator_reform}, \eqref{ref:dCayleyoperator2}, modifies the properties of $G_{i}$ to
\begin{align}
\sigma_{\mathrm{LB}, i} \| \w \hspace{-2pt} - \hspace{-2pt} \z \|_{2}
\leq 
\| \nabla D^{-1}  \partial G_{i}(\w) - \nabla D^{-1} \partial G_{i}(\z)  \|_{2} 
\leq 
\sigma_{\mathrm{UB}, i} \| \w \hspace{-1pt} - \hspace{-1pt} \z \|_{2},
\label{eq:G_4bounds_sec2}
\end{align}
where $0 \hspace{-2pt} \leq \hspace{-2pt} \sigma_{\mathrm{LB}, i} \hspace{-2pt} \leq \hspace{-2pt} \sigma_{\mathrm{UB}, i} \hspace{-2pt} < \hspace{-2pt} +\infty$. 
This indicates that $\nabla D^{-1} \partial G_{i}$ is assumed to be Lipschitz continuous, but is not to be strongly convex. 
By modifying $\nabla D$ while satisfying $\nabla D(\bm{0}) \hspace{-1pt} = \hspace{-1pt} \bm{0}$, the pair of $\{ \sigma_{\mathrm{UB}, i}, \sigma_{\mathrm{LB}, i} \}$ will be changed.

\begin{table}[t]
\centering
\caption{Convergence Rates of B-MOS Algorithms.}
\vspace{-5pt}
\label{tbl:crates}
\begin{tabular}{|l|l|}
\hline
Bregman Peaceman-Rachford splitting & $\parallel \hspace{-1pt} \z^{t} \hspace{-1pt} - \hspace{-1pt} \z^{\ast} \hspace{-1pt} \parallel_{2}
\leq \hspace{-1pt} 
(\eta_{1} \eta_{2})^{t}
\parallel \hspace{-1pt} \z^{0} \hspace{-1pt} - \hspace{-1pt} \z^{\ast} \hspace{-1pt} \parallel_{2} $
 \\ \hline
Bregman Douglas-Rachford splitting & 
$\parallel \hspace{-2pt} \z^{t} \hspace{-1pt} - \hspace{-1pt} \z^{\ast} \hspace{-2pt} \parallel_{2}
\leq \hspace{-1pt} 
(1 \hspace{-1pt} - \hspace{-1pt} \alpha \hspace{-1pt} + \hspace{-1pt} \alpha \eta_{1} \eta_{2})^{t}
 \hspace{-2pt}
\parallel \hspace{-2pt} \z^{0} \hspace{-1pt} - \hspace{-1pt} \z^{\ast} \hspace{-2pt} \parallel_{2}$ 
  \\ \hline
Bregman Forward-Backward splitting &  
$\parallel \hspace{-2pt} \w^{t} - \w^{\ast} \hspace{-2pt} \parallel_{2} 
\leq \lambda^{t}
\parallel \hspace{-2pt} \w^{0} - \w^{\ast} \hspace{-2pt} \parallel_{2} $
\\ \hline
\end{tabular}
\vspace{-0pt}
\end{table}

To clarify the optimal convergence condition associated with $\{ \sigma_{\mathrm{UB}, i}, \sigma_{\mathrm{LB}, i} \}$, 
the convergence rates on B-MOS algorithms were investigated in Appendix \ref{sec:appendixB} 
and they are summarized in Table \ref{tbl:crates}. For the Bregman Peaceman-Rachford splitting \eqref{eq:PRsplit}, 
the convergence rate is predicted by
\begin{align}
\parallel \hspace{-1pt} \z^{t} \hspace{-1pt} - \hspace{-1pt} \z^{\ast} \hspace{-1pt} \parallel_{2}
\leq \hspace{-1pt} 
(\eta_{1} \eta_{2})^{t}
\parallel \hspace{-1pt} \z^{0} \hspace{-1pt} - \hspace{-1pt} \z^{\ast} \hspace{-1pt} \parallel_{2}, 
\label{eq:BPRsplit_crate_sec2}
\end{align}
where $\z^{\ast}$ denotes the fixed point of $\z$ and
\begin{align}
\eta_{i} = \sqrt{ 1 - \frac{4 \sigma_{\mathrm{LB}, i}}{ (1 + \sigma_{\mathrm{UB}, i} )^{2} }}.
\label{eq:eta_sec2}
\end{align}
\eqref{eq:BPRsplit_crate_sec2} indicates that fast convergence will be achieved by
modifying $\{ \sigma_{\mathrm{UB}, i}, \sigma_{\mathrm{LB}, i} \}$ such that $\eta_{i}$ is zero. 
For the Bregman Forward-Backward splitting \eqref{eq:FBsplit}, 
the convergence rate is described by
\begin{align}
\parallel \hspace{-2pt} \w^{t} - \w^{\ast} \hspace{-2pt} \parallel_{2} 
\leq \lambda^{t}
\parallel \hspace{-2pt} \w^{0} - \w^{\ast} \hspace{-2pt} \parallel_{2}, 
\label{eq:BFBsplit_crate_sec2}
\end{align}
where
\begin{align}
\lambda = 
\sqrt { \frac{ 1 - 2 \sigma_{\textrm{LB}, 1} + \sigma_{\textrm{UB}, 1}^{2} }{ (1 + \sigma_{\textrm{LB}, 2} )^{2} } },
\label{eq:lambda_sec2}
\end{align}
must be reduced to zero for fast convergence rate. 
As noted in Appendix \ref{sec:appendix}, the convergence rate factors for B-MOS algorithms are optimized as $\eta_{i} = 0, \lambda = 0$ only if $\{ \sigma_{\mathrm{UB}, i}, \sigma_{\mathrm{LB}, i} \}$ satisfies: 
\begin{align}
\sigma_{\mathrm{LB}, i} = 1, \sigma_{\mathrm{UB}, i} = 1. 
\label{optimalsigmapair_sec2} 
\end{align}
Thus, from \eqref{optimalsigmapair_sec2}, 
we conclude that, for fast convergence the Bregman divergence must be designed such that
both $\sigma_{\mathrm{UB}, i}$ and $\sigma_{\mathrm{LB}, i}$ approach 1.

Substituting \eqref{optimalsigmapair_sec2} into \eqref{eq:G_4bounds_sec2} illustrates the meaning of \eqref{optimalsigmapair_sec2}, we have 
\begin{align}
\| \nabla D^{-1}  \partial G_{i}(\w) - \nabla D^{-1} \partial G_{i}(\z) \|_{2} 
\approx 
\| \w - \z \|_{2}. 
\label{eq:optimalConvexity}
\end{align}
\eqref{eq:optimalConvexity} indicates that $\nabla D$ modifies the convexity of $\partial G_{i}$ to be proportional in the $L_{2}$ norm domain. 
It is equivalent to modifying the metric of the space to make $G_{i}$ a quadratic function with a Hessian that is a unit matrix $\frac{1}{2} \hspace{-2pt} \parallel \hspace{-2pt} \w \hspace{-2pt} - \hspace{-2pt} \w^{\ast} \hspace{-2pt} \parallel_{2}^{2}$.

\subsection{Implementation Example of Bregman Divergence}
\label{sec:2_4}

We now discuss a practical Bregman divergence design method that approximates \eqref{eq:optimalConvexity}. 
In order to make this method available even if $G_{i}$ is not differentiable strictly convex, 
it is assumed that we have a differentiable strictly convex function $\overline{G}_{i}$ that approximates  $G_{i}$. (A method to obtain $\overline{G}_{i}$ is provided later in this section. )
When we have $\overline{G}_{i}$, it is a good choice to follow \eqref{eq:optimalConvexity} at the first setting $t \hspace{-1pt} = \hspace{-1pt} 0$ as
\begin{align}
\nabla D (\w) \hspace{-1pt} = \hspace{-1pt} \nabla \overline{G}_{1} (\w) \hspace{-1pt} - \hspace{-1pt} \nabla \overline{G}_{1} (\bm{0}),
\label{eq:nablaD_Design_1}
\end{align}
where the subtractive term is used to satisfy $\nabla D(\bm{0}) \hspace{-1pt} = \hspace{-1pt} \bm{0}$. 
However, our overall cost is $G \hspace{-1pt} = \hspace{-1pt} G_{1} \hspace{-1pt} + \hspace{-1pt} G_{2}$ and $D$ is restricted to be differentiable strictly convex. 
As a simple design of $D$ that works even when the convexity property of $G_{1}$ and that of $G_{2}$ are quite different, 
we use a $\nabla D$ that matches $\nabla \overline{G}$ as
\begin{align}
\nabla D (\w) \hspace{-1pt} = \hspace{-1pt} 
\nabla \overline{G} (\w)
- \nabla \overline{G} (\bm{0}). 
\label{eq:nablaD_Design_2}
\end{align}
The integral of \eqref{eq:nablaD_Design_2} is given by
\begin{align}
D (\w) \hspace{-1pt} = \hspace{-1pt} 
\overline{G} (\w) 
- \left< \nabla \overline{G} (\bm{0}), \w \right> - \overline{G} (\bm{0}). 
\label{eq:D_Design_2}
\end{align}
Although better choices for $D$, which match \eqref{eq:optimalConvexity} better, are likely possible, we leave that for future work. This is because it would be dependent on the combination of the convexity property of $G_{1}$ and that of $G_{2}$.

As a design of $\overline{G}$, we use a quadratic representation of $G$. When $G$ is differentiable at the point $\z$, 
a second-order Taylor expansion around that point is a choice of $\overline{G}$ as
\begin{align}
\overline{G}(\w) = G(\z) + \left< \nabla G(\z), \w - \z \right> + \frac{1}{2} \left< \M(\z) (\w - \z), \w - \z \right>, 
\label{eq:Gover_2GD}
\end{align}
where $G$ is allowed to be replaced by its majorization function $G(\w) + \epsilon/2  \| \w \hspace{-1pt} - \hspace{-1pt} \z \|_{2}^{2} \hspace{3pt} (\epsilon \hspace{-2pt} > \hspace{-2pt} 0)$ when it is not differentiable at the point $\z$ 
and $\M(\z)$ denotes the Hessian of $G$ or its majorization function. 
By substituting \eqref{eq:Gover_2GD} into \eqref{eq:D_Design_2}, we obtain 
\begin{align}
D (\w) &= \frac{1}{2} \left< \M(\z) \w, \w \right>. 
\label{eq:D_2GD}
\end{align}
This indicates that the Bregman divergence is given by
\begin{align}
B_{D}^{(\textrm{Newton})}(\w \hspace{-2pt} \parallel \hspace{-2pt} \z) = \frac{1}{2} \left< \M(\z) (\w - \z), (\w - \z) \right>. 
\label{eq:bregman_Taylor_2GD}
\end{align}
Since the metric of variable space is modified by using a Hessian matrix, the Bregman divergence design \eqref{eq:bregman_Taylor_2GD} is associated with the Newton method. 
Note that this Bregman divergence design is not perfectly matched with the property \eqref{eq:optimalConvexity} 
because a second-order approximation is used in \eqref{eq:D_2GD}. 
Following \eqref{eq:G_4bounds_sec2}, the properties of $G$ are then modified by using 
$\nabla D^{-1} \hspace{-2pt} = \hspace{-2pt} \M^{-1}(\z)$ as 
\begin{align}
\sigma_{\mathrm{LB}, i}^{\textrm{(Newton)}} \| \w \hspace{-2pt} - \hspace{-2pt} \z \|_{2}
\leq 
\| \nabla D^{-1}  \partial G_{i}(\w) - \nabla D^{-1} \partial G_{i}(\z)  \|_{2} 
\leq 
\sigma_{\mathrm{UB}, i}^{\textrm{(Newton)}} \| \w \hspace{-1pt} - \hspace{-1pt} \z \|_{2},
\label{eq:G_sigmarange_2GD}
\end{align}
where both $\sigma_{\mathrm{UB}, i}^{\textrm{(Newton)}}$ and $\sigma_{\mathrm{LB}, i}^{\textrm{(Newton)}}$ would approach 1.

When $\M(\z)$ in \eqref{eq:bregman_Taylor_2GD} is replaced by its diagonalized matrix $\L(\z)$, 
the Bregman divergence is then given by
\begin{align}
B_{D}^{(\textrm{AGD})}(\w \hspace{-2pt} \parallel \hspace{-2pt} \z) = \frac{1}{2} \left< \L(\z) (\w - \z), (\w - \z) \right>, 
\label{eq:bregman_Taylor_AGD}
\end{align}
where the diagonal elements of $\L(\z)$ are the same as $\M(\z)$. 
The Bregman divergence form \eqref{eq:bregman_Taylor_AGD} is associated with the accelerated gradient descent (AGD) 
because its step-size is independent for each element. 
For smoothly variable update, it is often used to update $\L(\z)$ using it at the previous step \cite{duchi2011adaptive, kingma2014adam, tieleman2012lecture}. 
%
%

Finally, the relationship between the conventional (Euclidean) MOS solvers and the first-order gradient descent is briefly discussed. 
As noted in Sec. \ref{sec:2_2}, the Bregman divergence reduces to the Euclidean distance when using $D(\w) \hspace{-2pt} = \hspace{-2pt} \frac{1}{2 \kappa} \| \w \|_{2}^{2}$ as
\begin{align}
B_{D}^{(\textrm{GD})}(\w \hspace{-2pt} \parallel \hspace{-2pt} \z) = \frac{1}{2 \kappa} \| \w - \z \|_{2}^{2}. 
\label{eq:bregman_Taylor_GD}
\end{align}
Since the variable is then updated with its gradient multiplied to a given step-size, 
this is associated with the gradient descent (GD) method.

It is difficult to provide a 
model of the eigenvalue dynamic range differences for the three methods.
It is reasonable to assume that, in general,
\begin{align}
1 \leq 
\frac{\sigma_{\mathrm{UB}, i}^{\textrm{(Newton)}}}{\sigma_{\mathrm{LB}, i}^{\textrm{(Newton)}}} 
\leq 
\frac{\sigma_{\mathrm{UB}, i}^{\textrm{(AGD)}}}{\sigma_{\mathrm{LB}, i}^{\textrm{(AGD)}}} 
\leq 
\frac{\sigma_{\mathrm{UB}, i}^{\textrm{(GD)}}}{\sigma_{\mathrm{LB}, i}^{\textrm{(GD)}}}. 
\label{eq:eigenratiomodel}
\end{align}
Moreover, it is reasonable to assume that $\sigma_{\mathrm{UB}, i}$ and $\sigma_{\mathrm{LB}, i}$ will be closest to 1 with the Newton method.

\section{APPLICATION EXAMPLE}
\label{sec:3}

In this section, several B-MOS solvers are applied to 
the total variation (TV) denoising problem \cite{rudin1992nonlinear} as an example. 
We first formulate the problem in Sec. \ref{sec:3_1} 
and its solver implementation is provided in Sec. \ref{sec:3_2}. 
Through numerical experiments in Sec. \ref{sec:3_3}, we will illustrate the effectiveness of B-MOS.

\subsection{Problem Definition}
\label{sec:3_1}

Let suppose that the observed source $\s \hspace{-2pt} \in \hspace{-2pt} \mathbb{R}^{m}$ including random noise $\e \hspace{-2pt} \in \hspace{-2pt} \mathbb{R}^{m}$ is given. 
When the original source is denoted by $\u^{\ast} \hspace{-2pt} \in \hspace{-2pt} \mathbb{R}^{m}$, 
the generative process of $\s$ is modeled by $\s \hspace{-2pt} = \hspace{-2pt} \u^{\ast} \hspace{-2pt} + \hspace{-2pt} \e$. 
TV denoising is used to remove noise from $\s$ and its cost function is formulated by
\begin{align}
\inf_{\u} \hspace{4pt} \frac{1}{2} \parallel \hspace{-2pt} \s - \u \hspace{-2pt} \parallel_{2}^{2} + \parallel \hspace{-2pt} \u \hspace{-2pt} \parallel_{\textrm{TV}}, 
\label{eq:problemdefine_mat_sec3}
\end{align}
where the TV norm \cite{rudin1992nonlinear} in the elastic net norm form \cite{zou2005regularization} is denoted by
\begin{align}
\parallel \hspace{-2pt} \u \hspace{-2pt} \parallel_{\textrm{TV}} \hspace{2pt} =  
\mu \left( \frac{\theta}{2} \| \bm{\Phi} \u \|_{2}^{2} + \hspace{-2pt} \| \bm{\Phi} \u \|_{1} \right), 
\label{eq:TVnorm_mat_sec3}
\end{align}
where $\mu \hspace{-2pt} > \hspace{-2pt} 0, \theta  \hspace{-2pt} > \hspace{-2pt} 0$ and 
$\bm{\Phi} \hspace{-2pt} \in \hspace{-2pt} \mathbb{R}^{m \times m}$ is full-rank and is used to calculate the discrete difference between neighborhood elements. For the case that a Sobel filter is used, the $i$-th element of $\bm{\Phi} \u$ is
\begin{align}
&\left[ \bm{\Phi} \u \right]_{i} = 
u_{i-1} - u_{i+1},
\end{align}
However since the lower case affine transformation is included in the regularization term, 
it may be difficult to update $\u$ such that it reduces the overall cost \eqref{eq:problemdefine_mat_sec3}.

To overcome this issue, applying MOS solvers is effective. 
The problem form \eqref{eq:problemdefine_mat_sec3} is reformulated 
by using an auxiliary variable $\v \hspace{-2pt} \in \hspace{-2pt} \mathbb{R}^{m}$ as 
\begin{align}
\inf_{ \u, \v } \hspace{4pt} 
H_{1}(\u) + H_{2}(\v) \hspace{15pt} \textrm{s.t.} \hspace{2pt} \v = \bm{\Phi} \u, 
\label{eq:problemdefine_vec_sec3}
\end{align}
where $H_{1}(\u) \hspace{-1pt} = \hspace{-1pt} \frac{1}{2}  \| \s \hspace{-1pt} - \hspace{-1pt} \u  \|_{2}^{2}$ and $H_{2}(\v) \hspace{-1pt} = \hspace{-1pt} \mu \left( \frac{\theta}{2} \| \v \|_{2}^{2} + \hspace{-1pt} \| \v \|_{1} \right)$. For the  linearly constrained problem \eqref{eq:problemdefine_vec_sec3}, 
it is usual to solve the Lagrangian dual ascent problem as explained in Ex. 2 of Sec. \ref{sec:2_1}. 
The associated Lagrangian is given by
\begin{align}
\mathcal{L}(\u, \v, \w) = H_{1}(\u) + H_{2}(\v) + \left< \w,  - \bm{\Phi} \u + \v \right>, 
\label{eq:lagrangian_sec3}
\end{align}
where $\w \hspace{-2pt} \in \hspace{-2pt} \mathbb{R}^{m}$ denotes the dual variable. 
Its dual problem is
\begin{align}
\sup_{ \w } 
\inf_{ \u, \v } \hspace{2pt}
\mathcal{L}(\u, \v, \w)
%
\hspace{-2pt} = \hspace{-2pt} 
- \inf_{ \w } \left( H^{\star}_{1} \left( \bm{\Phi}^{\textrm{T}} \w  \right) 
\hspace{-2pt} + \hspace{-2pt} H^{\star}_{2} ( - \w )  \right), \hspace{-5pt}
\label{eq:Ldualproblem_b_sec3}
\end{align}
%
where the convex conjugate of $H_{i} \hspace{2pt} (i \hspace{-2pt} = \hspace{-2pt} 1,2)$ is denoted by
\begin{align}
H^{\star}_{1} \left( \bm{\Phi}^{\textrm{T}} \w \right) 
&= \sup_{\u} \left( \left< \bm{\Phi}^{\textrm{T}} \w, \u \right> - H_{1}(\u) \right), 
\label{eq:dualconjugate1_sec3} \\
H^{\star}_{2}( -\w ) 
&= \sup_{ \v } \left( - \left< \w, \v \right> - H_{2}(\v) \right). 
\label{eq:dualconjugate2_sec3}
\end{align}
Since \eqref{eq:Ldualproblem_b_sec3} indicates that we will optimize $\w$ such that optimizes the sum of two CCP functions, 
the dual problem of TV denoising is of the form \eqref{eq:mintwoccp}. Hence, any B-MOS solver can be used.

\subsection{Solver Implementation}
\label{sec:3_2}

To solve the problem \eqref{eq:Ldualproblem_b_sec3}, 
nonexpansive Bregman Peaceman-Rachford (B-P-R) splitting and Bregman Douglas-Rachford (B-D-R) splitting are applied. 
To simplify notification, the subdifferential of the convex conjugate functions are denoted by 
$T_{1}(\w) \hspace{-2pt} = \hspace{-2pt} \bm{\Phi} \partial H^{\star}_{1} \left( \bm{\Phi}^{\textrm{T}} \w  \right)$ and 
$T_{2}(\w) \hspace{-2pt} = \hspace{-2pt} - \partial H^{\star}_{2} \left( - \w  \right)$, respectively. 
By using the results of Sec. \ref{sec:2_2}, the update procedure becomes 
\begin{align}
&\w^{t+1} \hspace{-1pt} = \hspace{-1pt} R_{1} (\z^{t})
\hspace{-1pt} = \hspace{-1pt} \left( I + \nabla D^{-1}  T_{1} \right)^{-1} (\z^{t}),
\label{eq:AP1_1_01} \\
&\x^{t+1} \hspace{-1pt} = \hspace{-1pt} C_{1} (\z^{t}) \hspace{-1pt} = \hspace{-1pt} 2 \w^{t+1} \hspace{-1pt} - \hspace{-1pt} \z^{t},
\label{eq:AP1_1_02} \\
&\y^{t+1} \hspace{-1pt} = \hspace{-1pt} R_{2} (\x^{t+1})
\hspace{-1pt} = \hspace{-1pt} ( I + \nabla D^{-1} T_{2} )^{-1} (\x^{t+1}), 
\label{eq:AP1_1_03} \\
&\z^{t+1} \hspace{-2pt} = \hspace{-2pt} 
\begin{cases}
C_{2} (\x^{t+1}) \hspace{-1pt} = \hspace{-1pt} 2 \y^{t+1} \hspace{-1pt} - \hspace{-1pt} \x^{t+1} 
& \textrm{(B-P-R splitting)} \\
\alpha C_{2} (\x^{t+1})  + (1-\alpha ) \z^{t}  \hspace{-1pt} = \hspace{-1pt} 
\z^{t} + 2 \alpha ( \y^{t+1} - \w^{t+1} )
& \textrm{(B-D-R splitting)} 
\end{cases}. 
\label{eq:AP1_1_04}
\end{align}
In the following, we will focus on the two remaining issues: 
(i) how to update the variables using the $D$-resolvent operator when its monotone operator is the subdifferential of the convex conjugate function as in \eqref{eq:AP1_1_01}, \eqref{eq:AP1_1_03} 
and (ii) the Bregman divergence design such that follows the discussion in Sec. \ref{sec:2_4}.

We now discuss the variable update using the $D$-resolvent operator $R_{1}$. 
Since the convex conjugate function includes the (primal) variable optimization of \eqref{eq:dualconjugate1_sec3}, its procedure forms an iterative update of $\{ \u, \w \}$. 
Associated with $H_{1}^{\star}$, let us consider the following problem:
\begin{align}
\inf_{\u} \hspace{2pt} H_{1}(\u) \hspace{15pt} 
\textrm{s.t. } \hspace{1pt} \bm{\Phi} \u = \bm{0}. 
\end{align}
We minimize the associated Lagrangian and update $\u$ accordingly. This minimization is equivalent to \eqref{eq:dualconjugate1_sec3}. 
For the associated Lagrangian $\mathcal{L}(\u, \w) \hspace{-2pt} =  \hspace{-2pt} H_{1}(\u)  \hspace{-2pt} -  \hspace{-2pt} \left< \w, \bm{\Phi} \u \right>$, $\u$ is updated such that minimizes it. Thus, the subgradient of it includes zero as 
\begin{align}
\bm{0} &\in \partial H_{1}(\u) - \bm{\Phi}^{\textrm{T}} \w,
\notag \\
\bm{0} &\in \u - \partial H_{1}^{-1} (\bm{\Phi}^{\textrm{T}} \w),
\notag \\
\bm{0} &\in \bm{\Phi} \u - \bm{\Phi} \partial H_{1}^{-1} (\bm{\Phi}^{\textrm{T}} \w).
\label{eq:u_represent1_0}
\end{align}
Since the inverse subdifferential of a CCP function is related to the subdifferential of its convex conjugate function \cite{tyrrell1970convex} as $T_{1}(\w) \hspace{-2pt} = \hspace{-2pt} \bm{\Phi} \partial H_{1}^{\star}(\bm{\Phi}^{\textrm{T}} \w) \hspace{-2pt} = \hspace{-2pt} \bm{\Phi} \partial H_{1}^{-1}(\bm{\Phi}^{\textrm{T}} \w)$, \eqref{eq:u_represent1_0} can be rewritten as reformulated as
\begin{align}
\bm{0} &\in \bm{\Phi} \u - T_{1} (\w), \notag \\
\bm{\Phi} \u &\in T_{1} \left( \w \right). 
\label{eq:u_represent1}
\end{align}
%
%
%
For the input/output pair of $D$-resolvent operator $\w \hspace{-2pt} \in \hspace{-2pt} R_{1}(\z)$, 
it is reformulated such that it includes $\{ \u, \w \}$ by inserting \eqref{eq:u_represent1} into \eqref{eq:AP1_1_01}:
\begin{align}
\w &\in ( I + \nabla D^{-1} T_{1} )^{-1} (\z), \notag \\
( I + \nabla D^{-1} T_{1} ) (\w) &\in \z, \notag \\
\w + \nabla D^{-1} ( \bm{\Phi} \u ) &= \z, \hspace{8pt}
\bm{0} \in T_{1}^{-1} ( \bm{\Phi} \u) - \w,  
\label{eq:w_u_represent}
\end{align}
where \eqref{eq:u_represent1} is used in \eqref{eq:w_u_represent}. 
By reorganizing \eqref{eq:w_u_represent}, it is found that $\{ \u, \z \}$ are related by
\begin{align}
&\bm{0} \in T_{1}^{-1}(\bm{\Phi} \u) - (\z - \nabla D^{-1} ( \bm{\Phi} \u ) ), \notag \\
&\bm{0} \in \bm{\Phi}^{\textrm{T}} T_{1}^{-1}(\bm{\Phi} \u) - \bm{\Phi}^{\textrm{T}} (\z - \nabla D^{-1} ( \bm{\Phi} \u ) ), \notag \\
&\bm{0} \in \partial H_{1}(\u) - \bm{\Phi}^{\textrm{T}} (\z - \nabla D^{-1} ( \bm{\Phi} \u ) ).
\label{eq:z_u_represent}
\end{align}
The integral of \eqref{eq:z_u_represent} gives a $\u$-update procedure using the dual auxiliary variable $\z^{t}$ as
\begin{align}
\u^{t+1} = \arg \min_{\u} \left( H_{1}(\u) - \left< \z^{t}, \bm{\Phi} \u \right> 
+ D^{-1} ( \bm{\Phi} \u ) \right). 
\label{eq:Dresolvent1_imp_001}
\end{align}
From \eqref{eq:w_u_represent}, the $\w$-update procedure using $\u^{t+1}$ is given by
\begin{align}
\w^{t+1} = \z^{t} - \nabla D^{-1} ( \bm{\Phi} \u^{t+1} ). 
\label{eq:Dresolvent1_imp_002}
\end{align}
In addition, for the update procedure using $R_{2}$ in \eqref{eq:AP1_1_03}, 
$\{ \v, \y \}$ are updated by 
\begin{align}
&\v^{t+1} \hspace{-1pt} = \hspace{-1pt} 
\arg \min_{\v} 
\left( H_{2}(\v) - \left< \x^{t+1}, -\v \right> + D^{-1}( - \v) \right), 
\label{eq:Dresolvent2_imp_001} \\
&\y^{t+1} \hspace{-1pt} = \hspace{-1pt}
\x^{t+1} - \nabla D^{-1} (- \v^{t+1}). 
\label{eq:Dresolvent2_imp_002}
\end{align}

By substituting the results in \eqref{eq:Dresolvent1_imp_001}--\eqref{eq:Dresolvent2_imp_002} into 
\eqref{eq:AP1_1_01}--\eqref{eq:AP1_1_04}, several dual auxiliary variables are removed, 
and the update procedure based on B-P-R and B-D-R splitting can be written as 
\begin{align}
&\u^{t+1} \hspace{-1pt} = \hspace{-1pt} 
\arg \min_{\u} 
\left( H_{1}(\u) - \left< \z^{t}, \bm{\Phi} \u \right> + D^{-1}(\bm{\Phi} \u) \right), 
\label{eq:AP1_2_01} \\
&\x^{t+1} \hspace{-1pt} = \hspace{-1pt}
\z^{t} - 2 \nabla D^{-1} (\bm{\Phi} \u^{t+1}),
\label{eq:AP1_2_02} \\
&\v^{t+1} \hspace{-1pt} = \hspace{-1pt} 
\arg \min_{\v} 
\left( H_{2}(\v) - \left< \x^{t+1}, -\v \right> + D^{-1}( - \v) \right), 
\label{eq:AP1_2_03} 
\\
&\z^{t+1} \hspace{-1pt} = \hspace{-1pt}
\begin{cases}
\x^{t+1} - 2 \nabla D^{-1} (-\v^{t+1}) 
& \textrm{(B-P-R splitting)}  \\
\z^{t} - 2 \alpha \left( \nabla D^{-1} (\bm{\Phi} \u^{t+1}) + \nabla D^{-1} (-\v^{t+1})  \right)
& \textrm{(B-D-R splitting)} 
\end{cases}.
\label{eq:AP1_2_04}
\end{align}
By substituting several nonlinearly transformed auxiliary variables
$\x^{t} \hspace{-2pt} = \hspace{-2pt} \nabla D^{-1}(\tilde{\x}^{t}) $ and $\z^{t} \hspace{-2pt} = \hspace{-2pt} \nabla D^{-1}(\tilde{\z}^{t})$ 
into \eqref{eq:AP1_2_01}--\eqref{eq:AP1_2_04}, 
a further simplified update procedure/notification is obtained. 
Then, \eqref{eq:AP1_2_01} is denoted by
\begin{align}
&\u^{t+1} \hspace{-1pt} = \hspace{-1pt} 
\arg \min_{\u} 
\left( H_{1}(\u) - \left< \nabla D^{-1}(\tilde{\z}^{t}), \bm{\Phi} \u \right> + D^{-1}(\bm{\Phi} \u) \right). 
\label{eq:AP1_3_01} 
\end{align}
This is equivalent to solving
\begin{align}
\u^{t+1} \hspace{-1pt} = \hspace{-1pt} 
\arg \min_{\u} 
\left( H_{1}(\u) + B_{D^{-1}}( \bm{\Phi} \u \| \tilde{\z}^{t} ) \right), 
\end{align}
where the Bregman divergence is used as a penalty term:
\begin{align}
B_{D^{-1}}( \bm{\Phi} \u \| \tilde{\z}^{t} ) = D^{-1}(\bm{\Phi} \u) - D^{-1}(\tilde{\z}^{t}) - \left< \nabla D^{-1}(\tilde{\z}^{t}), \bm{\Phi} \u - \tilde{\z}^{t} \right>. 
\end{align}
The update procedure in \eqref{eq:AP1_2_02} is simplified to
\begin{align}
\nabla D^{-1}(\tilde{\x}^{t+1}) \hspace{-1pt} &= \hspace{-1pt} \nabla D^{-1}(\tilde{\z}^{t}) - 2 \nabla D^{-1} (\bm{\Phi} \u^{t+1}),  \notag \\
\tilde{\x}^{t+1} \hspace{-1pt} &= \hspace{-1pt} \tilde{\z}^{t} - 2 \bm{\Phi} \u^{t+1}.
\end{align}
Therefore, the overall update procedure \eqref{eq:AP1_2_01}--\eqref{eq:AP1_2_04} is summarized by
\begin{align}
&\u^{t+1} \hspace{-1pt} = \hspace{-1pt} 
\arg \min_{\u} 
\left( H_{1}(\u) + B_{D^{-1}}( \bm{\Phi} \u \| \tilde{\z}^{t} ) \right), 
\label{eq:AP1_4_01} \\
&\tilde{\x}^{t+1} \hspace{-1pt} = \hspace{-1pt}
\tilde{\z}^{t} - 2 \bm{\Phi} \u^{t+1}, 
\label{eq:AP1_4_02} \\
&\v^{t+1} \hspace{-1pt} = \hspace{-1pt} 
\arg \min_{\v} 
\left( H_{2}(\v) + B_{D^{-1}}( -\v \| \tilde{\x}^{t+1} ) \right), 
\label{eq:AP1_4_03} 
\\
&\tilde{\z}^{t+1} \hspace{-1pt} = \hspace{-1pt}
\begin{cases}
\tilde{\x}^{t+1} + 2 \v^{t+1} 
& \textrm{(B-P-R splitting)} \\
\tilde{\z}^{t} - 2 \alpha \left( \bm{\Phi} \u^{t+1} -\v^{t+1}  \right)
& \textrm{(B-D-R splitting)} 
\end{cases}.
\label{eq:AP1_4_04}
\end{align}
The resulting algorithm is summarized in \textbf{Algorithm} \ref{alg:TVdenoising_BMOS}.

\begin{algorithm}[t]
   \caption{Bregman Peaceman-Rachford (B-P-R)/ Bregman Douglas-Rachford (B-D-R) splitting based TV denoising}
   \label{alg:TVdenoising_BMOS}
\begin{algorithmic}
\STATE{Initialization of $\tilde{\z}^{0}$} \\
\FOR{$t=0, \ldots, T \hspace{-2pt} - \hspace{-2pt}  1$}

\vspace{0pt}

\STATE
$\hspace{10pt} 
\u^{t+1} = \arg \min_{\u} \left( H_{1}(\u) 
+ B_{D^{-1}}( \bm{\Phi} \u \| \tilde{\z}^{t} )  \right), $ \\
$\hspace{10pt} 
\tilde{\x}^{t+1} = \tilde{\z}^{t} - 2 \bm{\Phi} \u^{t+1},$ \\
$\hspace{10pt} 
\v^{t+1} = \arg \min_{\v} \left( H_{2}(\v) + B_{D^{-1}}( -\v \| \tilde{\x}^{t+1}) \right), $ \\
$\hspace{10pt} 
\tilde{\z}^{t+1} = 
\begin{cases}
\tilde{\x}^{t+1} + 2 \v^{t+1} \hspace{50pt} 
\textrm{(B-P-R splitting)}  \\
\tilde{\z}^{t} - 2 \alpha ( \bm{\Phi} \u^{t+1} - \v^{t+1} ) 
\hspace{5pt} 
\textrm{(B-D-R splitting)} 
\end{cases}
$ \\

\vspace{0pt}
\ENDFOR
\end{algorithmic}
\end{algorithm}

Next, a Bregman divergence design is explained. 
As discussed in Sec. \ref{sec:2_4}, a practical choice of Bregman divergence is to use the second-order gradient (Hessian) of the cost function. 
The method can be useful even then the cost function is based on convex conjugation, as is the case here. 
When each cost is approximated by a strictly convex function using a second-order gradient, it is given by a quadratic form as: $\overline{H}_{1}(\u) \hspace{-2pt} = \hspace{-2pt} \frac{1}{2} \| \u \|_{2}^{2}$ and $\overline{H}_{2}(\v) \hspace{-2pt} = \hspace{-2pt} \frac{\mu \theta}{2} \| \v \|_{2}^{2}$. Note that the convex conjugate of a quadratic form is also quadratic. The Hessian of the convex conjugate of a quadratic is the inverse Hessian of the original quadratic \cite{bertsekas1978local}. Thus, we obtain $\overline{H}_{1}^{\star}( \bm{\Phi}^{\textrm{T}} \w ) \hspace{-1pt} = \hspace{-1pt} \frac{1}{2} \| \bm{\Phi}^{\textrm{T}} \w \|_{2}^{2}$, $\overline{H}_{2}^{\star}( -\w ) \hspace{-1pt} = \hspace{-1pt} \frac{1}{2 \mu \theta} \| \w \|_{2}^{2}$. Then, $\overline{H}^{\star} \hspace{-1pt} = \hspace{-1pt} \overline{H}^{\star}_{1} \hspace{-1pt} + \hspace{-1pt} \overline{H}^{\star}_{2}$ is given by
\begin{align}
\overline{H}^{\star} (\w) = \frac{1}{2} \left< \left( \frac{1}{\mu \theta} \I + \bm{\Phi} \bm{\Phi}^{\textrm{T}}  \right) \w, \w  \right>, 
\label{eq:HstrictConjugate_Design} 
\end{align}
where $\I$ is a unit matrix. Following the results in Sec. \ref{sec:2_4}, 
a reasonable choice of the Bregman divergence metric $D(\w)$ is of the form
\begin{align}
D(\w) &= \frac{1}{2} \left< \bm{\Psi} \w, \w \right>, 
\label{eq:AP1_Dinv_Design} 
\end{align}
where Newton \eqref{eq:bregman_Taylor_2GD}, AGD \eqref{eq:bregman_Taylor_AGD} and GD \eqref{eq:bregman_Taylor_GD} design methods can be used:
\begin{align}
\bm{\Psi}  = 
\begin{cases}
\M = \frac{1}{\mu \theta} \I + \bm{\Phi} \bm{\Phi}^{\textrm{T}}   &\textrm{(Newton)} \\
\L = \textrm{Diag} \left(  \frac{1}{\mu \theta} \I + \bm{\Phi} \bm{\Phi}^{\textrm{T}} \right) &\textrm{(AGD)} \\
\frac{1}{\kappa} \I &\textrm{(GD)} \\
\end{cases},
\label{eq:AP1_Psi_Design} 
\end{align}
where $\textrm{Diag}( \cdot )$ generates a diagonal matrix with the argument vector as diagonal. When we select $\bm{\Psi} \hspace{-1pt} = \hspace{-1pt} \frac{1}{\kappa} \I$ following GD, 
then \textbf{Algorithm} \ref{alg:TVdenoising_BMOS} reduces to a conventional Peaceman-Rachford and Douglas-Rachford splitting.


When the metric of Bregman divergence is given by \eqref{eq:AP1_Dinv_Design}, 
the $\u$-update procedure \eqref{eq:AP1_4_01} is given by an analytical form as
\begin{align}
\u^{t+1} = \left( \I + \bm{\Phi}^{\textrm{T}} \bm{\Psi}^{-1} \bm{\Phi} \right)^{-1} 
\left(  \s + \bm{\Psi}^{-1} \z^{t} \right). 
\end{align}
Meanwhile, the $\v$-update procedure \eqref{eq:AP1_4_03} implies that the subdifferential of the cost includes zero as
\begin{align}
\bm{0} \in \mu \left( \theta \v + \partial \| \v \|_{1} \right) + \bm{\Psi}^{-1} \v + \bm{\Psi}^{-1} \tilde{\x}^{t+1}. 
\end{align}
For the subdifferential of $L_{1}$ norm $\bm{\xi} \hspace{-2pt} = \hspace{-2pt} \partial \| \v \|_{1} $, 
the $i$-th element of is calculated by 
\begin{align}
\xi_{i}^{t+1} = 
\begin{cases}
1 & \left( \left[ \bm{\Psi}^{-1} \tilde{\x}^{t+1} \right]_{i} > \mu \right) \\
-\left[ \frac{1}{\mu} \bm{\Psi}^{-1} \tilde{\x}^{t+1} \right]_{i} & \left( -\mu \leq \left[ \bm{\Psi}^{-1} \tilde{\x}^{t+1} \right]_{i} \leq \mu \right)  \\
-1 & \left( \left[ \bm{\Psi}^{-1} \tilde{\x}^{t+1} \right]_{i} < - \mu \right)
\end{cases}.
\end{align}
Thus, the $\v$-update procedure is given by
\begin{align}
v_{i}^{t+1} = 
\begin{cases}
0 & \left( -\mu \leq \left[ \bm{\Psi}^{-1} \tilde{\x}^{t+1} \right]_{i} \leq \mu \right)
\\
\left[ - \left( \mu \theta \I + \bm{\Psi}^{-1} \right)^{-1} 
\left( \bm{\Psi}^{-1} \tilde{\x}^{t+1} + \mu \bm{\xi}^{t+1} \right) \right]_{i} & \textrm{(otherwise)}
\end{cases}. 
\end{align}

\begin{table}[t]
\centering
\caption{Parameter Settings.}
\vspace{-5pt}
\label{tbl:param}
\begin{tabular}{|l|l|l|}
\hline
Parameter & Algorithm ($B_{D}$ design) & Value \\\hline
Elastic net normalization coefficient, $\mu$ & all & 2.0 \\ \hline
Squared $L_{2}$ normalization coefficient, $\theta$ & all & 1.0 \\ \hline
Step-size used in conventional methods, $\kappa$ & P-R/D-R & 0.01 \\ \hline
Averaging coefficient, $\alpha$ & B-D-R (Newton/AGD), D-R & 0.5 \\ \hline
\end{tabular}
\vspace{-3pt}
\end{table}

\begin{figure}[t]
\vskip 0.1in
\begin{center}
\centerline{\includegraphics[width=140mm]{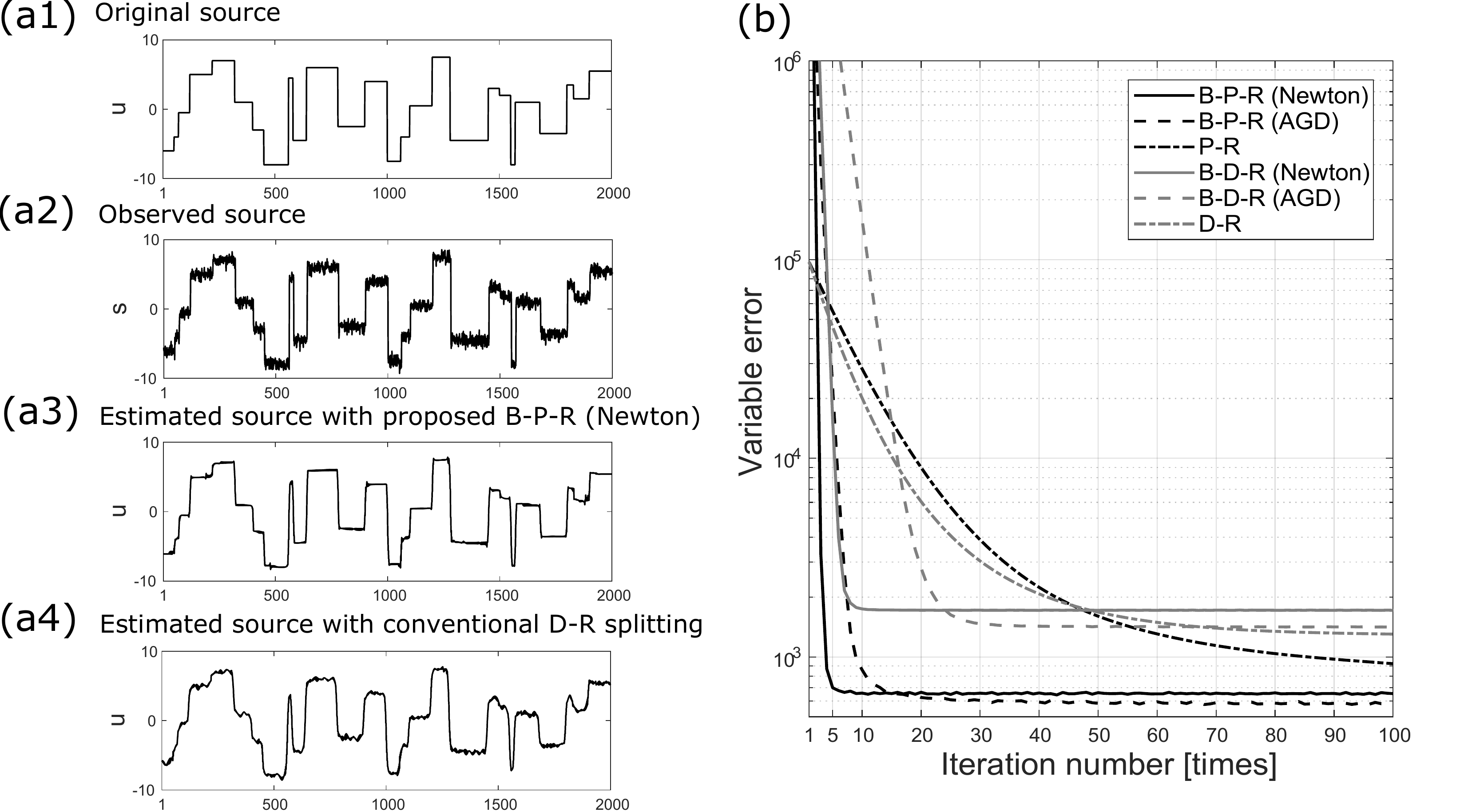}}
\vskip -0.1in
\caption{Results of numerical experiments, 
(a1) original source $\u_{\textrm{GT}}$, 
(a2) observed source $\s \hspace{-1pt} = \hspace{-1pt} \u_{\textrm{GT}} \hspace{-1pt} + \hspace{-1pt} \e$, 
(a3) estimated source $\u$ with proposed Bregman Peaceman-Rachford splitting with Newton method, 
(a4) estimated source $\u$ with conventional Douglas-Rachford splitting, 
(b) convergence rate curves.
}
\label{fig:res_ap1}
\end{center}
\vskip -0.1in
\end{figure}

\subsection{Numerical Experiments}
\label{sec:3_3}

The convergence rates of several B-MOS algorithms were compared with conventional MOS methods. 
Bregman Peaceman-Rachford (B-P-R) and Bregman Douglas-Rachford (B-D-R) splitting in the form summarized in \textbf{Algorithm} \ref{alg:TVdenoising_BMOS} were applied. To those algorithms, three kinds of Bregman divergence metric design \eqref{eq:AP1_Dinv_Design} and \eqref{eq:AP1_Psi_Design} are available. In total six algorithm forms are obtained from their combination, 
of which two methods, which use Euclidean distance as a Bregman divergence metric design, 
are conventional Peaceman-Rachford (P-R) and Douglas-Rachford (D-R) splitting.

As an example, we generate a source that is piecewise constant as shown in Fig. \ref{fig:res_ap1} (a1). 
Given the ground truth vector $\u_{\textrm{GT}}$ whose dimension is $m \hspace{-2pt} = \hspace{-2pt} 2000$, 
the observed source $\s$ is obtained by $\s \hspace{-2pt} = \hspace{-2pt} \u_{\textrm{GT}} + \e$ 
where noise $\e$ is drawn from a normal distribution $\mathrm{Norm}(0, 0.5)$ and it is shown in Fig. \ref{fig:res_ap1} (a2). 
Several parameters used in this experiments are summarized in Table \ref{tbl:param}. 
As an evaluation measure, the variable error $E^{t}$ defined by 
the squared error between the estimated variable $\u^{t}$ and its ground truth $\u_{\textrm{GT}}$ was used as
\begin{align}
E^{t} = \frac{1}{2} \| \u_{\textrm{GT}} - \u^{t} \|_{2}^{2}. 
\label{eq:ap1_err}
\end{align}

The resulting variables with proposed B-P-R with Newton and conventional D-R splitting are shown in Fig. \ref{fig:res_ap1} (a3) and Fig. \ref{fig:res_ap1} (a4), respectively. 
An estimated variable close to the ground truth was obtained. 
Figure \ref{fig:res_ap1} (b) shows the relationships between the six methods and the variable error $E^{t}$. 
The experimental results show that B-P-R with Newton had the fastest convergence rates followed by B-P-R with AGD and B-D-R with Newton. The convergence rates with conventional P-R and D-R splitting were slow for this task. 
A major advantage of the new method is that we do not have to set a learning rate.

\section{CONCLUSION}
\label{sec:5}

We considered the use of operator splitting to find the infimum of
$G(\w) \hspace{-2pt} = \hspace{-2pt} G_1(\w) \hspace{-2pt} + \hspace{-2pt} G_2(\w)$, where $G_1$ and $G_2$ are convex, closed proper functions.
We proposed a generalization of monotone operator splitting (MOS) based on Bregman
divergence (B-MOS). The convergence rates of the generalized approach depend on the choice
for the Bregman divergence. 
We found that fast a convergence rate can be achieved by
designing the function $D$ that characterizes the Bregman divergence $B_D(\w\| \z)$ such
that $\nabla D^{-1} \partial G_i$ is near the identity operator. 
Since the cost function is composed of two CCP functions, $D$ is matched to each CCP
function for each update.
A major advantage of the new method is it eliminates the need to  carefully set learning rates. 
The outcomes of our
numerical experiments, in which the B-MOS  solvers were applied to a constrained
optimization problem, revealed that B-MOS solvers  can significantly improve the
convergence rate in practical optimization problems.

\appendix
\section{Attributes of $D$-Resolvent Operator, $D$-Cayley Operator and $D$-Forward Step}
\label{sec:appendix}

$D$-resolvent operator, the $D$-Cayley operator and the $D$-forward step. 
To this purpose, we first model that how the property of $G_{i}$ will be modified by applying $\nabla D^{-1}$ to $\partial G_{i}$.

We assume that $\partial G_{i}$ satisfies
\begin{align}
\gamma_{\textrm{LB}, i}
\| \w \hspace{-1pt} - \hspace{-1pt} \z  \|_{2}
\leq 
\| \partial G_{i}(\w) - \partial G_{i}(\z) \|_{2}
\leq 
\gamma_{\textrm{UB}, i} \| \w \hspace{-1pt} - \hspace{-1pt} \z \|_{2},
\label{eq:G_2bounds}
\end{align}
for any two different points $\w \in \mathrm{dom}(G_i)$ and $\z \in \mathrm{dom}(G_i)$, and where $0 \leq
\gamma_{\textrm{LB}, i} \leq \gamma_{\textrm{UB}, i} < +\infty $.  
Applying $\nabla D^{-1}$ to $\partial G_{i}$ modifies the property of $G_{i}$ to
\begin{align}
\sigma_{\mathrm{LB}, i} \| \w \hspace{-2pt} - \hspace{-2pt} \z \|_{2}
\leq 
\| \nabla D^{-1}  \partial G_{i}(\w) - \nabla D^{-1} \partial G_{i}(\z)  \|_{2} 
\leq 
\sigma_{\mathrm{UB}, i} \| \w \hspace{-1pt} - \hspace{-1pt} \z \|_{2},
\label{eq:G_4bounds}
\end{align}
where $0 \leq \sigma_{\mathrm{LB}, i} \leq \sigma_{\mathrm{UB}, i} < +\infty$. 
Note that $\nabla D^{-1} \partial G_{i}$ is assumed to be Lipschitz continuous,
but is not necessarily strongly convex. 
In this Appendix 
we find the optimal pair of $\{ \sigma_{\mathrm{UB}, i}, \sigma_{\mathrm{LB}, i} \}$
for fast convergence using B-MOS algorithms.

We can now derive the Lipschitz continuity of the $D$-resolvent operator, $D$-Cayley
operator and $D$-forward step (with assumption) using $\{ \sigma_{\mathrm{UB}, i}, \sigma_{\mathrm{LB}, i} \}$.

\begin{theorem}{Nonexpansive property of $D$-resolvent operator} \label{th:R1}

Let $\nabla D^{-1} \partial G_{i}$ be Lipschitz continuous on $\mathrm{dom} (G_i)$, i.e., 
$\{ \sigma_{\mathrm{UB}, i}, \sigma_{\mathrm{LB}, i} \}$ that satisfy
$0 \hspace{-2pt} \leq  \hspace{-2pt} \sigma_{\mathrm{LB}, i} \hspace{-2pt} \leq \hspace{-2pt} \sigma_{\mathrm{UB}, i} \hspace{-2pt} < \hspace{-2pt} + \infty$ in \eqref{eq:G_4bounds} exist. 
Then, the contractive ratio for the input/output pairs on the $D$-resolvent operator $R_{i}$ is given by
\begin{align}
\frac{1}{1 \hspace{-1pt} + \hspace{-1pt} \sigma_{\mathrm{UB}, i} } \hspace{-1pt}
\parallel \hspace{-1pt} \z^{t} \hspace{-1pt} - \hspace{-1pt} \z^{t-1} \hspace{-1pt} \parallel_{2}
&\hspace{-1pt} \leq 
\parallel \hspace{-1pt} R_{i}( \z^{t} ) \hspace{-1pt} - \hspace{-1pt} R_{i}( \z^{t-1} )  \hspace{-1pt} \parallel_{2} 
\leq \hspace{-1pt}
\frac{1}{1 \hspace{-1pt} + \hspace{-1pt} \sigma_{\mathrm{LB}, i} }  \hspace{-1pt}
\parallel \hspace{-1pt} \z^{t} \hspace{-1pt} - \hspace{-1pt} \z^{t-1} \hspace{-1pt} \parallel_{2}. 
\label{eq:resolvent_converge_rate}
\end{align}
When $\nabla D^{-1} \partial G_{i}$ is strongly monotone, i.e., $\sigma_{\mathrm{LB}, i} > 0$, $R_{i}$ is a contractive operator. 
Otherwise, $R_{i}$ is a nonexpansive operator.

\end{theorem}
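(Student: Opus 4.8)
The plan is to work directly from the fixed-point characterization of the resolvent. First I would fix the two inputs $\z^{t}$ and $\z^{t-1}$, set $\w^{t} \in R_{i}(\z^{t})$ and $\w^{t-1}\in R_{i}(\z^{t-1})$, and invoke the defining relation $R_{i}=(I+\nabla D^{-1}\partial G_{i})^{-1}$ from \eqref{ref:dResolventoperator_reform}, which gives $\z^{t}\in(I+\nabla D^{-1}\partial G_{i})(\w^{t})$ and likewise for $t-1$. After selecting the subgradients that realize the two inclusions, subtracting them produces the key identity
\begin{align}
\z^{t}-\z^{t-1} = (\w^{t}-\w^{t-1}) + \bigl(\nabla D^{-1}\partial G_{i}(\w^{t})-\nabla D^{-1}\partial G_{i}(\w^{t-1})\bigr).
\end{align}
Writing $\Delta\w = \w^{t}-\w^{t-1} = R_{i}(\z^{t})-R_{i}(\z^{t-1})$ and $\Delta\g = \nabla D^{-1}\partial G_{i}(\w^{t})-\nabla D^{-1}\partial G_{i}(\w^{t-1})$, the whole statement reduces to bounding $\|\Delta\w\|_{2}$ above and below by multiples of $\|\Delta\w+\Delta\g\|_{2}$, using only the Lipschitz sandwich $\sigma_{\mathrm{LB},i}\|\Delta\w\|_{2}\le\|\Delta\g\|_{2}\le\sigma_{\mathrm{UB},i}\|\Delta\w\|_{2}$ from \eqref{eq:G_4bounds}.

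For the lower bound on $\|R_{i}(\z^{t})-R_{i}(\z^{t-1})\|_{2}$, i.e. the left inequality of \eqref{eq:resolvent_converge_rate}, I would apply the triangle inequality to the identity, $\|\z^{t}-\z^{t-1}\|_{2}\le\|\Delta\w\|_{2}+\|\Delta\g\|_{2}$, and then the upper Lipschitz estimate $\|\Delta\g\|_{2}\le\sigma_{\mathrm{UB},i}\|\Delta\w\|_{2}$ to obtain $\|\z^{t}-\z^{t-1}\|_{2}\le(1+\sigma_{\mathrm{UB},i})\|\Delta\w\|_{2}$, which rearranges immediately. This direction is routine.

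The delicate direction is the upper bound $\|\Delta\w\|_{2}\le(1+\sigma_{\mathrm{LB},i})^{-1}\|\z^{t}-\z^{t-1}\|_{2}$, and I expect it to be the main obstacle. A naive reverse triangle inequality only yields $\|\z^{t}-\z^{t-1}\|_{2}\ge|\,\|\Delta\w\|_{2}-\|\Delta\g\|_{2}\,|$, which is too weak and can collapse. The missing ingredient is directional: because each $\partial G_{i}$ is (maximally) monotone and $\nabla D^{-1}$ is the gradient of a strictly convex function, $\Delta\g$ cannot point opposite to $\Delta\w$, i.e. $\langle\Delta\w,\Delta\g\rangle\ge 0$. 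I would therefore expand $\|\z^{t}-\z^{t-1}\|_{2}^{2}=\|\Delta\w\|_{2}^{2}+2\langle\Delta\w,\Delta\g\rangle+\|\Delta\g\|_{2}^{2}$ and exploit the alignment of $\Delta\g$ with $\Delta\w$ that holds exactly when $\nabla D^{-1}\partial G_{i}$ acts as a positive scalar on the difference $\Delta\w$ (the regime realized by the quadratic/Hessian Bregman designs of Sec.~\ref{sec:2_4}); there $\langle\Delta\w,\Delta\g\rangle=\|\Delta\w\|_{2}\|\Delta\g\|_{2}$, so $\|\z^{t}-\z^{t-1}\|_{2}=(1+\sigma)\|\Delta\w\|_{2}$ with $\sigma\in[\sigma_{\mathrm{LB},i},\sigma_{\mathrm{UB},i}]$, delivering both inequalities simultaneously with the stated constants. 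Finally I would read off the dichotomy: if $\sigma_{\mathrm{LB},i}>0$ then $(1+\sigma_{\mathrm{LB},i})^{-1}<1$ and $R_{i}$ is contractive, whereas if $\sigma_{\mathrm{LB},i}=0$ the constant degrades to $1$ and $R_{i}$ is merely nonexpansive.
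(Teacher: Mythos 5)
Your setup and the left-hand inequality match the paper exactly: subtract the two inclusions $(I+\nabla D^{-1}\partial G_{i})(\w)=\z$, take norms, apply the triangle inequality together with the upper bound of \eqref{eq:G_4bounds}, and rearrange. That part is fine.

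The right-hand inequality is where your argument has a genuine gap. You correctly observe that the reverse triangle inequality is too weak, but the repair you offer does not deliver the stated constant. Plain monotonicity, $\left<\Delta\w,\Delta\g\right>\geq 0$, combined with $\|\Delta\g\|_{2}\geq\sigma_{\mathrm{LB},i}\|\Delta\w\|_{2}$, only gives
\begin{align}
\|\z^{t}-\z^{t-1}\|_{2}^{2}=\|\Delta\w\|_{2}^{2}+2\left<\Delta\w,\Delta\g\right>+\|\Delta\g\|_{2}^{2}\geq\left(1+\sigma_{\mathrm{LB},i}^{2}\right)\|\Delta\w\|_{2}^{2},
\end{align}
i.e.\ the weaker factor $(1+\sigma_{\mathrm{LB},i}^{2})^{-1/2}$ rather than $(1+\sigma_{\mathrm{LB},i})^{-1}$; a $\Delta\g$ orthogonal to $\Delta\w$ saturates this and violates the claimed bound. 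Your stronger "alignment" claim $\left<\Delta\w,\Delta\g\right>=\|\Delta\w\|_{2}\|\Delta\g\|_{2}$ holds only in the special case where $\nabla D^{-1}\partial G_{i}$ acts as a positive scalar on $\Delta\w$ (the isotropic quadratic design), so it proves a special case, not the theorem. What is actually needed, and what the paper invokes, is the \emph{strong monotonicity} reading of the lower bound in \eqref{eq:G_4bounds}, namely $\left<\Delta\w,\Delta\g\right>\geq\sigma_{\mathrm{LB},i}\|\Delta\w\|_{2}^{2}$; then $I+\nabla D^{-1}\partial G_{i}$ is $(1+\sigma_{\mathrm{LB},i})$-strongly monotone, Cauchy--Schwarz gives $(1+\sigma_{\mathrm{LB},i})\|\Delta\w\|_{2}^{2}\leq\left<\Delta\w,\z^{t}-\z^{t-1}\right>\leq\|\Delta\w\|_{2}\|\z^{t}-\z^{t-1}\|_{2}$, and the constant $(1+\sigma_{\mathrm{LB},i})^{-1}$ follows (this is the standard inverse-of-strongly-monotone-is-Lipschitz fact the paper cites, and the same inner-product inequality is used explicitly in the proof of Theorem~\ref{th:C1}). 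Note also that your justification of $\left<\Delta\w,\Delta\g\right>\geq 0$ itself is not automatic: the composition of two monotone maps need not be monotone, so even nonnegativity of the cross term must be taken as part of the hypothesis on $\nabla D^{-1}\partial G_{i}$ rather than deduced from monotonicity of the factors.
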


\begin{proof}
%
The input/output pairs for the $D$-resolvent operator $R_{i}=(1 + \nabla D^{-1} \partial G_{i} )^{-1}$ are $\w^{t} \hspace{-1pt} =
\hspace{-1pt} R_{i}(\z^{t-1})$, $\w^{t+1} \hspace{-1pt} = \hspace{-1pt} R_{i}(\z^{t})$. They are reformulated as
\begin{align}
\hspace{-2pt} ( I + \nabla D^{-1} \partial G_{i} ) (\w^{t}) \hspace{-1pt} = \hspace{-1pt} \z^{t-1}, 
\hspace{5pt}
(I + \nabla D^{-1} \partial G_{i} ) (\w^{t+1}) \hspace{-1pt} = \hspace{-1pt} \z^{t}. \notag
\end{align}
By subtracting these, we obtain
\begin{align}
&\hspace{-2pt} \left( I \hspace{-1pt} + \hspace{-1pt} \nabla D^{-1} \partial G_{i} \right) \hspace{-1pt} (\w^{t+1}) \hspace{-1pt} 
- \hspace{-1pt} \left( I \hspace{-1pt} + \hspace{-1pt} \nabla D^{-1} \partial G_{i} \right) \hspace{-1pt} (\w^{t}) \hspace{-1pt} = \hspace{-1pt} \z^{t} \hspace{-1pt} - \hspace{-1pt} \z^{t-1}. 
\label{eq:resolvent_tmp01}
\end{align}
Since $(I \hspace{-1pt} + \hspace{-1pt} \nabla D^{-1} \partial G_{i})$ is strongly monotone with 
$(1 \hspace{-1pt} + \hspace{-1pt} \sigma_{\mathrm{LB}, i} )$, 
its inverse operator $(I \hspace{-1pt} + \hspace{-1pt} \nabla D^{-1} \partial G_{i})^{-1} = \hspace{-1pt} R_{i}$ is Lipschitz continuous 
with $(1 \hspace{-1pt} + \hspace{-1pt} \sigma_{\mathrm{LB}, i} )^{-1}$, e.g., \cite{ryu2016primer}. 
Hence, the upper bound in (\ref{eq:resolvent_converge_rate}) is proven. 
Since $\sigma_{\mathrm{LB}, i} \hspace{-1pt} \geq \hspace{-1pt} 0$, 
this shows the nonexpansive property of $D$-resolvent operator and this fact was first proven in \cite{bauschke2003bregman}. 
By taking the norm of (\ref{eq:resolvent_tmp01}), we obtain
\begin{align}
&\parallel \hspace{-1pt} \w^{t+1} \hspace{-1pt} - \hspace{-1pt} \w^{t} \hspace{-1pt} \parallel_{2}
\hspace{-1pt} + \hspace{-1pt} \parallel \hspace{-1pt} \nabla D^{-1} \partial G_{i} (\w^{t+1}) \hspace{-1pt} 
- \hspace{-1pt} \nabla D^{-1} \partial G_{i} (\w^{t}) \hspace{-1pt} \parallel_{2} 
\geq \parallel \hspace{-1pt} \z^{t} \hspace{-1pt} - \hspace{-1pt} \z^{t-1} \hspace{-1pt} \parallel_{2}. 
\end{align}
Since $\nabla D^{-1} \partial G_{i}$ is assumed to be Lipschitz continuous as in \eqref{eq:G_4bounds}, 
the lower bound in (\ref{eq:resolvent_converge_rate}) is obtained. 
\end{proof}

\begin{theorem}{Nonexpansive property of $D$-Cayley operator} \label{th:C1}

Let $\nabla D^{-1} \partial G_{i}$ be Lipschitz continuous on $\mathrm{dom} (G_i)$, i.e., 
$\{ \sigma_{\mathrm{UB}, i}, \sigma_{\mathrm{LB}, i} \}$ that satisfy
$0 \hspace{-2pt} \leq \hspace{-2pt} \sigma_{\mathrm{LB}, i} \hspace{-2pt} \leq \hspace{-2pt} \sigma_{\mathrm{UB}, i} \hspace{-2pt} < \hspace{-2pt} + \infty$ in \eqref{eq:G_4bounds} exist. 
Then, the contractive ratio for the input/output pairs on the $D$-Cayley operator $C_{i}$ satisfies
\begin{align}
&\parallel \hspace{-1pt} C_{i}(\z^{t}) - C_{i}(\z^{t-1}) \hspace{-1pt} \parallel_{2}
\leq \eta_{i} 
\parallel \hspace{-1pt} \z^{t} - \z^{t-1} \hspace{-1pt} \parallel_{2},
\label{eq:Cayley_convergence_rate}
\end{align}
where $\eta_{i} \hspace{2pt} (0 \leq \eta_{i} \leq 1)$ is defined by 
\begin{align}
\eta_{i} = \sqrt{ 1 - \frac{4 \sigma_{\mathrm{LB}, i}}{ (1 + \sigma_{\mathrm{UB}, i} )^{2} }}.
\label{eq:eta1}
\end{align}
When $\nabla D^{-1} \partial G_{i}$ is strongly monotone, i.e., $\sigma_{\mathrm{LB}, i} > 0$, $C_{i}$ is a contractive operator. 
Otherwise,  $C_{i}$ is a nonexpansive operator. 
\end{theorem}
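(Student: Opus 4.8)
The plan is to reduce the $D$-Cayley operator to its resolvent representation $C_{i} = 2R_{i} - I$ and then ride on the bound already established in Theorem~\ref{th:R1}. Fix the two inputs $\z^{t}$ and $\z^{t-1}$ and write their resolvent images as $\w^{t+1} = R_{i}(\z^{t})$ and $\w^{t} = R_{i}(\z^{t-1})$. Using the defining identity $\z = (I + \nabla D^{-1}\partial G_{i})(\w)$ for each pair, I would introduce the increments $\Delta\w = \w^{t+1} - \w^{t}$ and $\Delta\u = \nabla D^{-1}\partial G_{i}(\w^{t+1}) - \nabla D^{-1}\partial G_{i}(\w^{t})$, so that $\z^{t} - \z^{t-1} = \Delta\w + \Delta\u$. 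A direct substitution then gives $C_{i}(\z^{t}) - C_{i}(\z^{t-1}) = 2\Delta\w - (\z^{t}-\z^{t-1}) = \Delta\w - \Delta\u$.

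The heart of the argument is a pair of parallelogram-type expansions. I would compute $\| \z^{t} - \z^{t-1} \|_{2}^{2} = \| \Delta\w + \Delta\u \|_{2}^{2}$ together with $\| C_{i}(\z^{t}) - C_{i}(\z^{t-1}) \|_{2}^{2} = \| \Delta\w - \Delta\u \|_{2}^{2}$, and subtract them to obtain the exact identity
\begin{align}
\| C_{i}(\z^{t}) - C_{i}(\z^{t-1}) \|_{2}^{2} = \| \z^{t} - \z^{t-1} \|_{2}^{2} - 4 \left< \Delta\w, \Delta\u \right>. \notag
\end{align}
It therefore suffices to bound the cross term from below by $\tfrac{\sigma_{\mathrm{LB}, i}}{(1 + \sigma_{\mathrm{UB}, i})^{2}} \| \z^{t} - \z^{t-1} \|_{2}^{2}$, because substituting this bound reproduces precisely the factor $\eta_{i}^{2} = 1 - 4\sigma_{\mathrm{LB}, i}/(1 + \sigma_{\mathrm{UB}, i})^{2}$ of \eqref{eq:eta1}.

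To produce that lower bound I would chain two estimates. First, from the strong monotonicity of $\nabla D^{-1}\partial G_{i}$ with constant $\sigma_{\mathrm{LB}, i}$ (the lower side of \eqref{eq:G_4bounds}, read in the same sense as in the proof of Theorem~\ref{th:R1}), I get $\left< \Delta\w, \Delta\u \right> \geq \sigma_{\mathrm{LB}, i} \| \Delta\w \|_{2}^{2}$. Second, the left inequality of Theorem~\ref{th:R1} gives $\| \Delta\w \|_{2} = \| R_{i}(\z^{t}) - R_{i}(\z^{t-1}) \|_{2} \geq (1 + \sigma_{\mathrm{UB}, i})^{-1} \| \z^{t} - \z^{t-1} \|_{2}$. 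Combining the two yields the required lower bound on $\left< \Delta\w, \Delta\u \right>$, and taking square roots in the identity above delivers the contraction inequality \eqref{eq:Cayley_convergence_rate}.

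Finally I would verify the qualitative claims about $\eta_{i}$: since $\sigma_{\mathrm{LB}, i} \leq \sigma_{\mathrm{UB}, i}$, the radicand satisfies $(1 + \sigma_{\mathrm{UB}, i})^{2} - 4\sigma_{\mathrm{LB}, i} \geq (1 + \sigma_{\mathrm{UB}, i})^{2} - 4\sigma_{\mathrm{UB}, i} = (1 - \sigma_{\mathrm{UB}, i})^{2} \geq 0$, so $\eta_{i}$ is real, while $\sigma_{\mathrm{LB}, i} \geq 0$ forces $\eta_{i} \leq 1$; strict inequality $\eta_{i} < 1$ holds exactly when $\sigma_{\mathrm{LB}, i} > 0$, which separates the contractive and merely nonexpansive cases. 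The step I expect to be the main obstacle is the cross-term lower bound: \eqref{eq:G_4bounds} is phrased as a norm bound, so converting its lower side into the genuine strong-monotonicity estimate $\left< \Delta\w, \Delta\u \right> \geq \sigma_{\mathrm{LB}, i} \| \Delta\w \|_{2}^{2}$ is the one place where monotonicity of $\nabla D^{-1}\partial G_{i}$ must be invoked, rather than merely its Lipschitz and coercivity norm bounds.
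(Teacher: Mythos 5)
Your proposal is correct and follows essentially the same route as the paper's proof: the identity $\|C_i(\z^t)-C_i(\z^{t-1})\|_2^2 = \|\z^t-\z^{t-1}\|_2^2 - 4\langle\Delta\w,\Delta\u\rangle$ is exactly what the paper obtains by expanding $\|2(\w^{t+1}-\w^t)-(\z^t-\z^{t-1})\|_2^2$, and both arguments then combine the strong-monotonicity lower bound on the cross term with the lower bound of Theorem~\ref{th:R1} on $\|\Delta\w\|_2$. Your closing observation that the cross-term estimate genuinely requires monotonicity of $\nabla D^{-1}\partial G_i$ rather than only the norm bounds of \eqref{eq:G_4bounds} is apt --- the paper invokes the same estimate while glossing over this point.
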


\begin{proof}

When we have $\w^{t} \hspace{-2pt} = \hspace{-2pt} R_{i} (\z^{t-1})$ and $\w^{t+1}
\hspace{-2pt} = \hspace{-2pt} R_{i} (\z^{t})$ of Theorem \ref{th:R1} holds, 
we obtain the following relationship by multiplying
$(\w^{t+1} \hspace{-2pt}  - \hspace{-2pt}  \w^{t})^{\textrm{T}}$ with
(\ref{eq:resolvent_tmp01}) as
\begin{align}
& \parallel \hspace{-3pt} \w^{t+1} \hspace{-3pt} - \hspace{-2pt} \w^{t} \hspace{-2pt} \parallel_{2}^{2} 
\hspace{-2pt} + \hspace{-2pt} \left< \w^{t+1} \hspace{-2pt} - \hspace{-1pt} \w^{t}, \nabla \hspace{-1pt} D^{-1} \partial G_{i} (\w^{t+1}) \hspace{-2pt} - \hspace{-2pt}
\nabla \hspace{-1pt} D^{-1} \partial G_{i} (\w^{t}) \hspace{-1pt} \right> 
\hspace{-2pt} = \hspace{-2pt} \left< \w^{t+1} \hspace{-2pt} - \hspace{-1pt}  \w^{t}, \z^{t} \hspace{-2pt} - \hspace{-2pt} \z^{t-1} \right>\hspace{-2pt}. \notag
\hspace{-5pt}
\end{align}
From the lower bound in \eqref{eq:G_4bounds}, we obtain
\begin{align}
\hspace{-4pt}
\left( 1 + \sigma_{\mathrm{LB}, i} \right)  \parallel \hspace{-2pt} \w^{t+1} \hspace{-2pt} - \hspace{-2pt} \w^{t} \hspace{-2pt} \parallel_{2}^{2} \hspace{2pt}
 \leq  \left< \w^{t+1} \hspace{-2pt} - \hspace{-2pt} \w^{t}, \z^{t} \hspace{-2pt} - \hspace{-2pt} \z^{t-1} \right>. 
\label{eq:Cayley_tmp01}
\end{align}
By taking the squared norm for the $D$-Cayley input/output pairs 
$\x^{t} \hspace{-2pt} = \hspace{-2pt} C_{i} (\z^{t-1})$, $\x^{t+1} \hspace{-2pt} = \hspace{-2pt} C_{i} (\z^{t})$, we obtain
\begin{subequations}
\begin{align}
\hspace{-10pt}  \parallel \hspace{-1pt} \x^{t+1} - \x^{t} \hspace{-1pt} \parallel_{2}^{2} 
&= \parallel \hspace{-1pt} 2( \w^{t+1} - \w^{t} ) - ( \z^{t} - \z^{t-1}) \hspace{-1pt} \parallel_{2}^{2} \notag \\
&= 4 \hspace{-1pt} \parallel \hspace{-3pt} \w^{t+1} \hspace{-2pt} - \hspace{-2pt} \w^{t} \hspace{-3pt} \parallel_{2}^{2} \hspace{-2pt}
-  4 \hspace{0pt} \left< \w^{t+1} \hspace{-3pt} - \hspace{-2pt} \w^{t}, \z^{t} \hspace{-2pt} - \hspace{-2pt} \z^{t-1} \right> \hspace{-2pt}
+ \hspace{-2pt} \parallel \hspace{-3pt} \z^{t} \hspace{-1pt} - \hspace{-1pt} \z^{t-1} \hspace{-3pt} \parallel_{2}^{2}  \hspace{-4pt}
\label{eq:Cayley_nonexpansive_tmp1} \\
&\leq \parallel \hspace{-1pt} \z^{t} - \z^{t-1} \hspace{-1pt} \parallel_{2}^{2}, 
\label{eq:Cayley_nonexpansive}
\end{align}
\end{subequations}
where \eqref{eq:Cayley_tmp01} is used for reforming \eqref{eq:Cayley_nonexpansive_tmp1} into \eqref{eq:Cayley_nonexpansive}, 
and this proves the nonexpansive property of $C_{i}$. 
Combining (\ref{eq:Cayley_tmp01}) and (\ref{eq:Cayley_nonexpansive_tmp1}) results in
\begin{align}
\parallel \hspace{-2pt} \x^{t+1} - \x^{t} \hspace{-2pt} \parallel_{2}^{2} 
\leq \parallel \hspace{-2pt} \z^{t} - \z^{t-1} \hspace{-2pt} \parallel_{2}^{2}
 -  4 \sigma_{\mathrm{LB}, i}
\parallel \hspace{-2pt} \w^{t+1} - \w^{t} \hspace{-2pt} \parallel_{2}^{2}. \notag
\end{align}
With the lower bound of (\ref{eq:resolvent_converge_rate}), we obtain
\begin{align}
\parallel \hspace{-2pt} \x^{t+1} - \x^{t} \hspace{-2pt} \parallel_{2}^{2} 
&\leq 
\left( 1 - \frac{4 \sigma_{\mathrm{LB}, i} }{ (1 + \sigma_{\mathrm{UB}, i} )^{2} } \right) \parallel \hspace{-2pt} \z^{t} - \z^{t-1} \hspace{-2pt} \parallel_{2}^{2}. \notag
\end{align}
Therefore, we obtain (\ref{eq:Cayley_convergence_rate}). 
\end{proof}

Next, we find the optimal values for $\{ \sigma_{\mathrm{UB}, i}, \sigma_{\mathrm{LB}, i} \}$ when $D$-Cayley operator is used. 
Let us optimize $\sigma_{\mathrm{LB}, i}$ given $\sigma_{\mathrm{UB}, i} \geq 0$. 
It is clear that this is the case for $\sigma_{\mathrm{LB}, i} \hspace{-1pt} = \hspace{-1pt} \min( \sigma_{\mathrm{UB}, i}, \frac{1}{4}(1 \hspace{-1pt} + \hspace{-1pt} \sigma_{\mathrm{UB}, i} )^2)$. 
This means that 
$\sigma_{\mathrm{LB}, i} \hspace{-1pt} = \hspace{-1pt} \sigma_{\mathrm{UB}, i} = \frac{1}{4}(1 \hspace{-1pt} + \hspace{-1pt} \sigma_{\mathrm{UB}, i} )^2$ only if $\sigma_{\mathrm{UB}, i} \hspace{-1pt} = \hspace{-1pt} 1$ and the contraction factor $\eta_{i}$ is then equal to 0. 
For $0 \hspace{-1pt} \leq \hspace{-1pt} \sigma_{\mathrm{UB}, i} \hspace{-1pt} <
\hspace{-1pt} 1$ or $\sigma_{\mathrm{UB}, i} \hspace{-1pt} > \hspace{-1pt} 1$,  
the optimal contraction factor results when $\sigma_{\mathrm{LB}, i}
\hspace{-1pt} = \hspace{-1pt} \sigma_{\mathrm{UB}, i}$. Thus, the contraction factor
$\eta_i$ satisfies
\begin{align}
0 \leq \sqrt{1 - \frac{4 \sigma_{\mathrm{UB}, i} }{(1+\sigma_{\mathrm{UB}, i}
  )^{2}}} \leq \eta_i \leq 1.
\end{align}
We conclude that optimal contraction for $D$-Cayley operator is obtained when 
\begin{align}
\sigma_{\mathrm{LB}, i} = 1, \hspace{5pt} \sigma_{\mathrm{UB}, i} = 1.
\label{optimalsigmapair_DCayley} 
\end{align}
Then the contractive ratio is obtained as $\eta_{i} = 0$. 
Moreover, for a given $\sigma_{\mathrm{UB}, i} $ it is optimal to minimize the dynamic 
range to $\sigma_{\mathrm{UB}, i} / \sigma_{\mathrm{LB}, i} = 1$.

\begin{theorem}{Lipschitz continuity of $D$-forward step} \label{th:Q1}

Let $\nabla D^{-1} \partial G_{i}$ be Lipschitz continuous on $\mathrm{dom} (G_i)$, i.e., 
$\{ \sigma_{\mathrm{UB}, i}, \sigma_{\mathrm{LB}, i} \}$ that satisfy
$0 \hspace{-1pt} \leq  \hspace{-1pt} \sigma_{\mathrm{LB}, i} \hspace{-1pt} \leq \hspace{-1pt} \sigma_{\mathrm{UB}, i} \hspace{-1pt} < \hspace{-1pt} + \infty$ in \eqref{eq:G_4bounds} exist. 
Then, the input/output pairs on the $D$-forward step $F_{i}$ satisfy:
\begin{align}
\parallel \hspace{-1pt} F_{i}( \z^{t} ) \hspace{-1pt} - \hspace{-1pt} F_{i}( \z^{t-1} )  \hspace{-1pt} \parallel_{2} 
\leq \hspace{-1pt}
\nu_{i} \parallel \hspace{-1pt} \z^{t} \hspace{-1pt} - \hspace{-1pt} \z^{t-1} \hspace{-1pt} \parallel_{2}, 
\label{eq:gradient_converge_rate}
\end{align}
where $\nu_{i} \geq 0$ is given by
\begin{align}
\nu_{i} = \sqrt{ 1 - 2 \sigma_{\textrm{LB}, i} + \sigma_{\textrm{UB}, i}^{2} }. 
\label{eq:nu}
\end{align}

\end{theorem}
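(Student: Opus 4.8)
The plan is to follow the same template used in the proofs of Theorems~\ref{th:R1} and~\ref{th:C1}, but now working directly from the definition $F_{i} = I - \nabla D^{-1} \partial G_{i}$ rather than passing through a resolvent. Writing $\Delta \z = \z^{t} - \z^{t-1}$ and $\Delta_{i} = \nabla D^{-1} \partial G_{i}(\z^{t}) - \nabla D^{-1} \partial G_{i}(\z^{t-1})$, the forward-step difference is simply $F_{i}(\z^{t}) - F_{i}(\z^{t-1}) = \Delta \z - \Delta_{i}$. The first step is therefore to expand the squared Euclidean norm as
\begin{align}
\| F_{i}(\z^{t}) - F_{i}(\z^{t-1}) \|_{2}^{2} = \| \Delta \z \|_{2}^{2} - 2 \left< \Delta \z, \Delta_{i} \right> + \| \Delta_{i} \|_{2}^{2}. \notag
\end{align}

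Next I would bound the three terms separately. The last term is controlled immediately by the upper bound in \eqref{eq:G_4bounds}, which gives $\| \Delta_{i} \|_{2}^{2} \leq \sigma_{\mathrm{UB}, i}^{2} \| \Delta \z \|_{2}^{2}$. For the cross term I would invoke the strong monotonicity of $\nabla D^{-1} \partial G_{i}$ with modulus $\sigma_{\mathrm{LB}, i}$ --- exactly the property already used to pass from the inner-product identity to \eqref{eq:Cayley_tmp01} in the proof of Theorem~\ref{th:C1} --- to obtain $\left< \Delta \z, \Delta_{i} \right> \geq \sigma_{\mathrm{LB}, i} \| \Delta \z \|_{2}^{2}$. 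Substituting both estimates collapses the right-hand side to $(1 - 2 \sigma_{\mathrm{LB}, i} + \sigma_{\mathrm{UB}, i}^{2}) \| \Delta \z \|_{2}^{2}$, and taking the square root yields \eqref{eq:gradient_converge_rate} with $\nu_{i}$ as in \eqref{eq:nu}. Unlike the Cayley case, no resolvent substitution and no appeal to \eqref{eq:resolvent_converge_rate} are needed, so after these two bounds the argument is purely algebraic.

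The main subtlety is that the lower bound in \eqref{eq:G_4bounds} is stated as a bound on the \emph{norm} of the difference, $\| \Delta_{i} \|_{2} \geq \sigma_{\mathrm{LB}, i} \| \Delta \z \|_{2}$, whereas the cross-term estimate requires the genuinely stronger \emph{strong-monotonicity} inequality $\left< \Delta \z, \Delta_{i} \right> \geq \sigma_{\mathrm{LB}, i} \| \Delta \z \|_{2}^{2}$. I would therefore make explicit at this step that $\sigma_{\mathrm{LB}, i}$ is being read as the strong-monotonicity modulus of $\nabla D^{-1} \partial G_{i}$ --- the same convention adopted throughout Appendix~\ref{sec:appendix}, e.g.\ in the phrasing ``when $\nabla D^{-1} \partial G_{i}$ is strongly monotone, i.e.\ $\sigma_{\mathrm{LB}, i} > 0$'' --- so that the bound rests on the same footing as the preceding two theorems rather than on the norm inequality alone. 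I would also note, to explain why the theorem claims only Lipschitz continuity and not nonexpansiveness, that $\nu_{i}$ can exceed $1$ whenever $\sigma_{\mathrm{UB}, i}^{2} > 2\sigma_{\mathrm{LB}, i}$; in contrast to $R_{i}$ and $C_{i}$, the forward step $F_{i}$ need not be contractive or nonexpansive in general.
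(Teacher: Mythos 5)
Your proposal is correct and follows essentially the same route as the paper's own proof: expand $\|F_i(\z^t)-F_i(\z^{t-1})\|_2^2$ into the three terms, bound the quadratic term by $\sigma_{\mathrm{UB},i}^2\|\z^t-\z^{t-1}\|_2^2$ via the upper bound in \eqref{eq:G_4bounds}, and bound the cross term from below by $\sigma_{\mathrm{LB},i}\|\z^t-\z^{t-1}\|_2^2$. Your explicit remark that this last step requires reading $\sigma_{\mathrm{LB},i}$ as a strong-monotonicity modulus rather than as the norm lower bound literally stated in \eqref{eq:G_4bounds} is a genuine point of care that the paper's proof passes over silently in its final inequality.
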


\begin{proof}

Consider $\z^t \hspace{-2pt} \in \hspace{-2pt} \mathrm{dom}(G_i)$ and $\z^{t+1} \in \mathrm{dom}(G_i)$  and the
$D$-forward step $F_{i} \hspace{-2pt} = \hspace{-2pt} (I  \hspace{-2pt} - \hspace{-2pt} \nabla D^{-1} \partial G_{i})$. 
Let $\w^{t} \hspace{-1pt} =
\hspace{-1pt} F_{i}(\z^{t-1})$, $\w^{t+1} \hspace{-1pt} = \hspace{-1pt} F_{i}(\z^{t})$.
%
The $L_{2}$ norm of the difference $\w^{t+1} - \w^{t}$ is then bounded by
%
\begin{align}
\parallel \hspace{-1pt} \w^{t+1} - \w^{t}  \hspace{-1pt} \parallel_{2}^{2} &=
\parallel \hspace{-1pt} 
(I \hspace{-1pt} - \hspace{-1pt} \nabla D^{-1} \partial G_{i}) (\z^{t})
- (I \hspace{-1pt} - \hspace{-1pt} \nabla D^{-1} \partial G_{i}) (\z^{t-1})
\hspace{-1pt} \parallel_{2}^{2} \notag \\
&=
\parallel \hspace{-2pt} 
\z^{t} - \z^{t-1}
- (\nabla D^{-1} \partial G_{i} (\z^{t}) -  \nabla D^{-1} \partial G_{i} (\z^{t-1}) )
\hspace{-2pt} \parallel_{2}^{2}
\notag \\
&= 
\parallel \hspace{-2pt} 
\z^{t} - \z^{t-1}
\parallel_{2}^{2} 
- 2 \left< \nabla D^{-1} \partial G_{i} (\z^{t}) -  \nabla D^{-1} \partial G_{i} (\z^{t-1}), \z^{t} - \z^{t-1} \right> \notag \\
& \hspace{120pt} + \parallel \nabla D^{-1} \partial G_{i} (\z^{t}) -  \nabla D^{-1} \partial G_{i} (\z^{t-1}) \hspace{-2pt} \parallel_{2}^{2}
\notag \\
&\leq  
(1 - 2 \sigma_{\textrm{LB}, i} + \sigma_{\textrm{UB}, i}^{2} )
\parallel \hspace{-2pt} 
\z^{t} - \z^{t-1}
\parallel_{2}^{2}
\label{Lipchitz_I_DinvpG2}. 
\end{align}
\end{proof}

\begin{figure}[t]
\vskip 0.1in
\begin{center}
\centerline{\includegraphics[width=90mm]{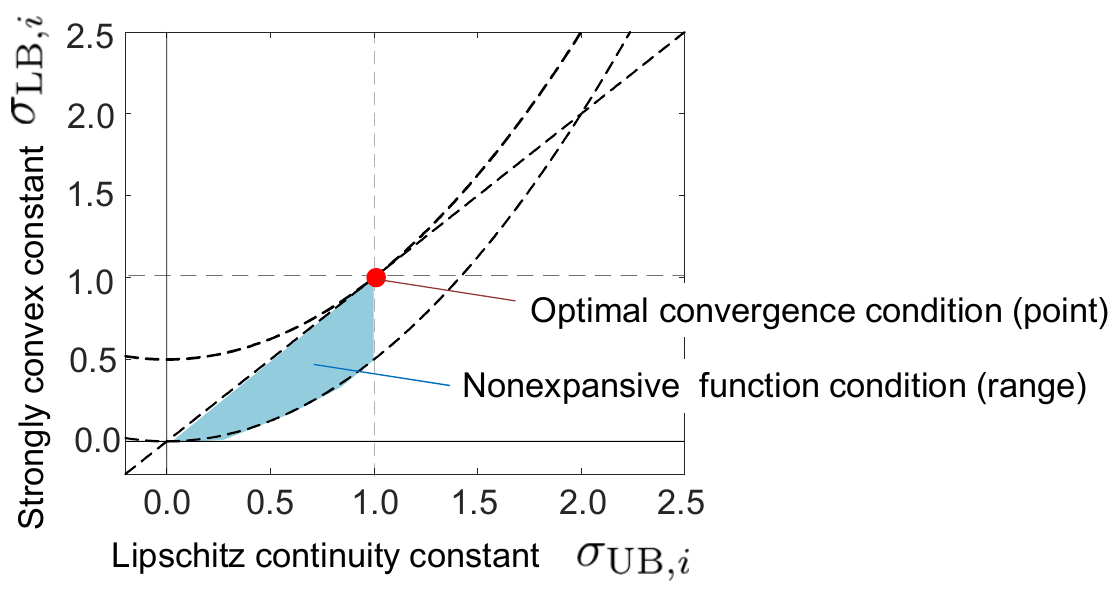}}
\vskip -0.12in
\caption{Requirement to make $D$-forward step nonexpansive operator. }
\label{fig:nonexpansivecondition}
\end{center}
\vskip -0.2in
\end{figure}

We now study the value range of $\{ \sigma_{\textrm{UB}, i}, \hspace{-1pt} \sigma_{\textrm{LB}, i} \}$ that makes the $D$-forward step a function (one-to-one mapping). For the input/output pairs for the $D$-forward step, we can write
\begin{align}
\parallel \hspace{-1pt} \w^{t+1} - \w^{t}  \hspace{-1pt} \parallel_{2} &=
\parallel \hspace{-1pt} 
(I \hspace{-1pt} - \hspace{-1pt} \nabla D^{-1} \partial G_{i}) (\z^{t})
- (I \hspace{-1pt} - \hspace{-1pt} \nabla D^{-1} \partial G_{i}) (\z^{t-1})
\hspace{-1pt} \parallel_{2} \notag \\
&=
\parallel \hspace{-2pt} 
\z^{t} - \z^{t-1}
- (\nabla D^{-1} \partial G_{i} (\z^{t}) -  \nabla D^{-1} \partial G_{i} (\z^{t-1}) )
\hspace{-2pt} \parallel_{2}.
\label{eq:ap1_tmp000}
\end{align}
When $\nabla D^{-1} \partial G_{i}$ is assumed to be nonexpansive operator, i.e., it satisfies $0 \leq \sigma_{\textrm{UB}, i} \leq 1$ in \eqref{eq:G_4bounds}, \eqref{eq:ap1_tmp000} is reformulated by
\begin{align}
\parallel \hspace{-1pt} \w^{t+1} - \w^{t}  \hspace{-1pt} \parallel_{2}
&\geq 
\parallel \hspace{-2pt} 
\z^{t} - \z^{t-1}
\hspace{-2pt} \parallel_{2}
- 
\parallel \hspace{-2pt} 
\nabla D^{-1} \partial G_{i} (\z^{t}) -  \nabla D^{-1} \partial G_{i} (\z^{t-1})
\hspace{-2pt} \parallel_{2}
\notag \\
&\geq 
(1 - \sigma_{\textrm{UB}, i} )
\parallel \hspace{-2pt} 
\z^{t} - \z^{t-1}
\hspace{-2pt} \parallel_{2}.
\end{align}
While satisfying $0 \hspace{-2pt} \leq \hspace{-2pt} (1 - \sigma_{\textrm{UB}, i} ) \hspace{-2pt} \leq \hspace{-2pt} 1$, i.e., $0 \leq \sigma_{\textrm{UB}, i} \leq 1$, $D$-forward step is a function.

Next, the condition to make the $D$-forward step a nonexpansive function is investigated. 
The nonexpansive condition $(0 \hspace{-1pt} \leq \hspace{-1pt} \nu_{i} \hspace{-1pt} \leq \hspace{-1pt} 1)$ is equivalent to 
$\frac{1}{2} \sigma_{\mathrm{UB}, i}^{2} \hspace{-1pt} \leq \hspace{-1pt} \sigma_{\textrm{LB}, i} \hspace{-1pt} \leq \hspace{-1pt} \frac{1}{2} 
( \sigma_{\mathrm{UB}, i}^{2} + 1 )$. 
Integrated with the condition $0 \hspace{-1pt} \leq \hspace{-1pt} \sigma_{\textrm{LB}, i} \hspace{-1pt} \leq \hspace{-1pt} \sigma_{\textrm{UB}, i} \hspace{-1pt} \leq \hspace{-1pt} 1$, the requirement to make the $D$-forward step a nonexpansive function
is summarized in Fig. \ref{fig:nonexpansivecondition}. The contraction factor of the $D$-forward step $\nu_{i} = 0$ corresponds to the unique solution
\begin{align}
\sigma_{\mathrm{LB}, i} = 1, \hspace{5pt} \sigma_{\mathrm{UB}, i} = 1. 
\label{optimalsigmapair_Dforward} 
\end{align}
This conclusion for the $\{ \sigma_{\textrm{UB}, i}, \hspace{-1pt} \sigma_{\textrm{LB}, i} \}$-optimization for $D$-forward step is in the nonexpansive function condition as shown in in Fig. \ref{fig:nonexpansivecondition} and it is equivalent to that for $D$-Cayley operator as in \eqref{optimalsigmapair_DCayley}.

\section{Convergence Rates on B-MOS Algorithms}
\label{sec:appendixB}

In Appendix \ref{sec:appendixB}, the convergence rates of B-MOS algorithms are investigated. 
Since these algorithms are based on the $D$-resolvent operator, $D$-Cayley operator and $D$-forward step, 
B-MOS convergence rates depend strongly on their Lipschitz continuity property explained in Appendix \ref{sec:appendix}. 
As discussed in Appendix \ref{sec:appendix}, 
$\nabla D^{-1} \partial G_{i}$ is assumed to be Lipschitz continuous, 
i.e., a pair $\{ \sigma_{\mathrm{UB}, i}, \sigma_{\mathrm{LB}, i} \}$ exists for
\eqref{eq:G_4bounds} , such that $0 \hspace{-2pt} \leq \hspace{-2pt} \sigma_{\mathrm{LB}, i} \hspace{-2pt} \leq \hspace{-2pt}
\sigma_{\mathrm{UB}, i} \hspace{-2pt} < \hspace{-2pt} + \infty$.

We first derive the convergence rate of Bregman Peaceman-Rachford splitting \eqref{eq:PRsplit}. 
From Theorem \ref{th:C1}, 
we found that the contractive ratio of the $D$-Cayley operator $C_{i}$ is provided by $\eta_{i}$ as in \eqref{eq:eta1}. 
For subsequent input/output pairs of Bregman Peaceman-Rachford splitting, 
$\z^{t+1}
\hspace{-1pt} = \hspace{-1pt} C_{2} C_{1} ( \z^{t} ), \z^{t} \hspace{-1pt} = \hspace{-1pt}
C_{2} C_{1} ( \z^{t-1} )$,  it follows from Theorem \eqref{th:C1} that the contractive ratio can be bounded by
\begin{align}
\parallel \hspace{-1pt} \z^{t+1} \hspace{-1pt} - \hspace{-1pt} \z^{t} \hspace{-1pt} \parallel_{2}
\leq \eta_{1} \eta_{2} \hspace{-1pt}
\parallel \hspace{-1pt} \z^{t} \hspace{-1pt} - \hspace{-1pt} \z^{t-1} \hspace{-1pt} \parallel_{2}.
\label{eq:contractiverate_PR} 
\end{align}
The difference between variable $\z^{t}$ and its fixed point $\z^{\ast}$ 
is represented by 
\begin{align}
\parallel \hspace{-1pt} \z^{t} \hspace{-1pt} - \hspace{-1pt} \z^{\ast} \hspace{-1pt} \parallel_{2}
& = \parallel \hspace{-1pt} \z^{t} - \z^{t+1}  + \z^{t+1} - \z^{t+2} + \cdots - \z^{\ast} \hspace{-1pt} \parallel_{2} \notag \\
%
& \leq \sum_{l = t}^{\infty} \parallel \hspace{-1pt} \z^{l} - \z^{l+1} \hspace{-1pt} \parallel_{2} \notag \\
&\leq \left( \sum_{j=1}^{\infty} \left( \eta_{1} \eta_{2} \right)^{j} \right) \parallel \hspace{-1pt} \z^{t+2} - \z^{t+1} \hspace{-1pt} \parallel_{2} \notag \\
&= \frac{\eta_{1} \eta_{2} }{1 - \eta_{1} \eta_{2} } \parallel \hspace{-1pt} \z^{t+2} - \z^{t+1} \hspace{-1pt} \parallel_{2}. 
\label{eq:tmprelation001}
\end{align}
Note that \eqref{eq:tmprelation001} is an upper bound of convergence rate. 

Similarly, we obtain
\begin{align}
\parallel \hspace{-1pt} \z^{t+1} \hspace{-1pt} - \hspace{-1pt} \z^{\ast} \hspace{-1pt} \parallel_{2}
&\leq \frac{1}{1 - \eta_{1} \eta_{2} } \parallel \hspace{-1pt} \z^{t+2} - \z^{t+1} \hspace{-1pt} \parallel_{2}.
\label{eq:tmprelation002}
\end{align}
From \eqref{eq:tmprelation001} and \eqref{eq:tmprelation002}, the following inequality is satisfied as 
\begin{align}
\parallel \hspace{-1pt} \z^{t+1} \hspace{-1pt} - \hspace{-1pt} \z^{\ast} \hspace{-1pt} \parallel_{2}
\leq \eta_{1} \eta_{2}
\parallel \hspace{-1pt} \z^{t} \hspace{-1pt} - \hspace{-1pt} \z^{\ast } \hspace{-1pt} \parallel_{2}. 
\end{align}
Thus, the convergence rate on Bregman Peaceman-Rachford splitting 
satisfies
\begin{align}
\parallel \hspace{-1pt} \z^{t} \hspace{-1pt} - \hspace{-1pt} \z^{\ast} \hspace{-1pt} \parallel_{2}
\leq \hspace{-1pt} 
(\eta_{1} \eta_{2})^{t}
\parallel \hspace{-1pt} \z^{0} \hspace{-1pt} - \hspace{-1pt} \z^{\ast} \hspace{-1pt} \parallel_{2}. 
\label{eq:crate_PR}
\end{align}

Next, we discuss the convergence rate of Bregman Douglas-Rachford splitting \eqref{eq:DRsplit2}. 
By using the triangle inequality, the contractive ratio of it is bounded by
\begin{align}
\parallel \hspace{-1pt} \z^{t+1} \hspace{-1pt} - \hspace{-1pt} \z^{\ast} \hspace{-1pt} \parallel_{2} 
&= \parallel \hspace{-1pt} \alpha C_{2} C_{1} (\z^{t}) + (1 \hspace{-1pt} - \hspace{-1pt} \alpha) \z^{t} \hspace{-1pt} - \hspace{-1pt} \z^{\ast} \hspace{-1pt} \parallel_{2} \notag \\
&\leq \alpha \hspace{-2pt} \parallel \hspace{-2pt} C_{2} C_{1} (\z^{t}) - \z^{\ast} \hspace{-2pt} \parallel_{2} 
+ (1 - \alpha) \hspace{-2pt} \parallel \hspace{-2pt} \z^{t} - \z^{\ast} \hspace{-2pt} \parallel_{2}
 \notag \\
&\leq \alpha \eta_{1} \eta_{2}
 \parallel \hspace{-2pt} \z^{t} - \z^{\ast} \hspace{-2pt} \parallel_{2}
+ (1 - \alpha) \hspace{-2pt} \parallel \hspace{-2pt} \z^{t} - \z^{\ast} \hspace{-2pt} \parallel_{2}
\notag \\
&= (1 \hspace{-1pt} - \hspace{-1pt} \alpha \hspace{-1pt} + \hspace{-1pt} \alpha 
\eta_{1} \eta_{2}) \parallel \hspace{-2pt} \z^{t} - \z^{\ast} \hspace{-2pt} \parallel_{2}.
\end{align}
Thus, the convergence rate of Bregman Douglas-Rachford splitting is bound by
\begin{align}
\hspace{-1pt} \parallel \hspace{-2pt} \z^{t} \hspace{-1pt} - \hspace{-1pt} \z^{\ast} \hspace{-2pt} \parallel_{2}
\leq \hspace{-1pt} 
(1 \hspace{-1pt} - \hspace{-1pt} \alpha \hspace{-1pt} + \hspace{-1pt} \alpha \eta_{1} \eta_{2})^{t}
 \hspace{-2pt}
\parallel \hspace{-2pt} \z^{0} \hspace{-1pt} - \hspace{-1pt} \z^{\ast} \hspace{-2pt} \parallel_{2}. 
\label{eq:crate_DR}
\end{align}

Bregman forward-backward splitting \eqref{eq:FBsplit} is composed of a $D$-forward
step for $G_{1}$ and a $D$-resolvent operator for $G_{2}$. The contractive ratio of these
operators is investigated in Theorems \ref{th:R1} and \ref{th:Q1}.
For input output pair $\w^{t} = F_{1} R_{2} (\w^{t-1}), \w^{t+1} = F_{1} R_{2} (\w^{t})$
the contraction is bound by
\begin{align}
\parallel \hspace{-1pt} \w^{t+1} \hspace{-1pt} - \hspace{-1pt} \w^{t} \hspace{-1pt} \parallel_{2} 
&\leq  \lambda
\parallel \hspace{-2pt} \w^{t} \hspace{-1pt} - \hspace{-1pt} \w^{t-1} \hspace{-2pt} \parallel_{2}, 
\end{align}
where $\lambda \geq 0$ is given by
\begin{align}
\lambda = 
\sqrt { \frac{ 1 - 2 \sigma_{\textrm{LB}, 1} + \sigma_{\textrm{UB}, 1}^{2} }{ (1 + \sigma_{\textrm{LB}, 2} )^{2} } }. 
\label{eq:lambda}
\end{align}
When the nonexpansive function condition for $\{ \sigma_{\textrm{UB}, 1}, \hspace{-1pt}
\sigma_{\textrm{LB}, 1} \}$, as shown in Fig. \ref{fig:nonexpansivecondition}, is
satisfied,  then application of Bregman forward-backward splitting generates a Cauchy-sequence as it satisfies $0 \leq \lambda \leq 1$. 
The convergence rate on Bregman forward-backward splitting is bound by
\begin{align}
\parallel \hspace{-2pt} \w^{t} - \w^{\ast} \hspace{-2pt} \parallel_{2} 
%
\leq \lambda^{t}
\parallel \hspace{-2pt} \w^{0} - \w^{\ast} \hspace{-2pt} \parallel_{2}.
\label{eq:crate_FB}
\end{align}

From the convergence rate predictions for Bregman Peaceman-Rachford splitting, Bregman Douglas
Rachford splitting and Bregman forward-backward splitting, given by \eqref{eq:crate_PR},
\eqref{eq:crate_DR} and \eqref{eq:crate_FB}, it is seen
that fast convergence is achieved when $\{ \sigma_{\mathrm{UB}, i}, \sigma_{\mathrm{LB}, i} \}$ approach 1 as in
\eqref{optimalsigmapair_DCayley} and
\eqref{optimalsigmapair_Dforward} because this implies $\eta_{i} = 0, \lambda = 0$.

\bibliographystyle{siamplain}

\begin{thebibliography}{99}

\bibitem{Liu2017Recent}
T. Y. Liu, W. Chen and T. Wang, 
\newblock Recent advances in distributed machine learning, 
\newblock http://www.aaai.org/Conferences/AAAI/2017/aaai17tutorials.php, 2017.

\bibitem{fenchel1949conjugate}
W. Fenchel, 
\newblock On conjugate convex functions, 
\newblock \emph{Canad. J. Math}, 1, 
73--77, 1949. 

\bibitem{uzawa1958gradient}
H. Uzawa, 
\newblock Gradient method for concave programming, ii: global stability in the strictly concave case, 
\newblock \emph{KJ Arrow, L. Hurwicz and H. Uzawa (1958)}, 
127--32, 1958.

\bibitem{ryu2016primer}
E. K. Ryu and S. Boyd, 
\newblock Primer on monotone operator methods,
\newblock \emph{Applied and Computational Mathematics}, 15(1), 
3--43, 2016.

\bibitem{bauschke2017convex}
H. H. Bauschke and P. L. Combettes, 
\newblock \emph{Convex analysis and monotone operator theory in Hilbert spaces},
\newblock Springer, 2017.


\bibitem{cauchy1847methode}
A. Cauchy, 
\newblock M{\'e}thode g{\'e}n{\'e}rale pour la r{\'e}solution des systemes
  d'{\'e}quations simultan{\'e}es,
\newblock \emph{Comptes Rendus de l'Academie des Sciences}, 25, 
536--538, 1847. 

\bibitem{nesterov1983method}
Y. Nesterov, 
\newblock A method of solving a convex programming problem with convergence
  rate o (1/k2),
\newblock \emph{Soviet Mathematics Doklady}, 27, 372--376, 1983.

\bibitem{shanno1970conditioning}
D. F. Shanno, 
\newblock Conditioning of quasi-Newton methods for function minimization,
\newblock \emph{Mathematics of Computation}, 24(111), 647--656, 1970. 

\bibitem{robbins1951stochastic}
H. Robbins and S. Monro, 
\newblock A stochastic approximation method,
\newblock \emph{The Annals of Mathematical Statistics}, 400--407, 1951.

\bibitem{bousquet2008tradeoffs}
O. Bousquet, and L. Bottou, 
\newblock The tradeoffs of large scale learning,
\newblock \emph{Advances in Neural Information Processing Systems}, 161--168, 2008.

\bibitem{johnson2013accelerating}
R. Johnson and T. Zhang, 
\newblock Accelerating stochastic gradient descent using predictive variance reduction,
\newblock \emph{Advances in Neural Information Processing Systems}, 315--323, 2013.

\bibitem{xiao2014proximal}
L. Xiao and T. Zhang, 
\newblock A proximal stochastic gradient method with progressive variance reduction,
\newblock \emph{SIAM Journal on Optimization}, 24(4), 2057--2075, 2014.

\bibitem{roux2012stochastic}
N. L. Roux, M. Schmidt and F. R. Bach, 
\newblock A stochastic gradient method with an exponential convergence rate for finite training sets,
\newblock \emph{Advances in Neural Information Processing Systems}, 
2663--2671, 2012.

\bibitem{defazio2014saga}
A. Defazio, F. Bach and S. Lacoste-Julien, 
\newblock Saga: a fast incremental gradient method with support for
  non-strongly convex composite objectives,
\newblock \emph{Advances in Neural Information Processing Systems}, 
1646--1654, 2014. 

\bibitem{suzuki2014stochastic}
T. Suzuki, 
\newblock Stochastic dual coordinate ascent with alternating direction method
  of multipliers,
\newblock \emph{International Conference on Machine Learning (ICML 2014)},
736--744, 2014.

\bibitem{shalev2013stochastic}
S. Shalev-Shwartz and T. Zhang, 
\newblock Stochastic dual coordinate ascent methods for regularized loss
  minimization,
\newblock \emph{Journal of Machine Learning Research}, 14, 
567--599, 2013.

\bibitem{vapnik1998statistical}
V. N. Vapnik, 
\newblock \emph{Statistical learning theory}, 1, 
\newblock Wiley New York, 1998.

\bibitem{qu2016sdna}
Z. Qu, P. Richt{\'a}rik, M. Tak{\'a}c, and O. Fercoq, 
\newblock SDNA: stochastic dual Newton ascent for empirical risk minimization,
\newblock \emph{International Conference on Machine Learning (ICML 2016)},
1823--1832, 2016.

\bibitem{gabay1976dual}
D. Gabay and B. Mercier, 
\newblock A dual algorithm for the solution of nonlinear variational problems
  via finite element approximation, 
\newblock \emph{Computers \& Mathematics with Applications}, 2(1), 
17--40, 1976.

\bibitem{douglas1956numerical}
J. Douglas and H. H. Rachford, 
\newblock On the numerical solution of heat conduction problems in two and
  three space variables,
\newblock \emph{Transactions of the American Mathematical Society}, 82(2), 
421--439, 1956.

\bibitem{zinkevich2010parallelized}
M. Zinkevich, M. Weimer, L. Li and A. L. Smola, 
\newblock Parallelized stochastic gradient descent, 
\newblock \emph{Advances in Neural Information Processing Systems}, 
2595--2603, 2010.

\bibitem{zhang2012communication}
Y. Zhang, M. J. Wainwright and J. C. Duchi, 
\newblock Communication-efficient algorithms for statistical optimization, 
\newblock \emph{Advances in Neural Information Processing Systems}, 
1502--1510, 2012.

\bibitem{recht2011hogwild}
B. Recht, C. Re, S. Wright and F. Niu, 
\newblock Hogwild: a lock-free approach to parallelizing stochastic gradient descent, 
\newblock \emph{Advances in Neural Information Processing Systems}, 
693--701, 2011.

\bibitem{zhang2015deep}
S. Zhang, A. E. Choromanska and Y. LeCun, 
\newblock Deep learning with elastic averaging SGD, 
\newblock \emph{Advances in Neural Information Processing Systems}, 
685--693, 2015.

\bibitem{jaggi2014communication}
M. Jaggi, V. Smith, M. Tak{\'a}c, J. Terhorst, S. Krishnan, T. Hofmann and M. I. Jordan, 
\newblock Communication-efficient distributed dual coordinate ascent, 
\newblock \emph{Advances in Neural Information Processing Systems}, 
3068--3076, 2014.

\bibitem{wei2012distributed}
E. Wei and A. Ozdaglar, 
\newblock Distributed alternating direction method of multipliers, 
\newblock In \emph{IEEE 51st Annual Conference on Decision and Control (CDC
2012)}, 5445--5450, 2012.

\bibitem{zhang2014asynchronous}
R. Zhang and J. Kwok, 
\newblock Asynchronous distributed admm for consensus optimization,
\newblock \emph{International Conference on Machine Learning (ICML 2014)}, 
1701--1709, 2014.

\bibitem{zhang2017distributed}
G. Zhang and R. Heusdens, 
\newblock Distributed optimization using the primal-dual method of multipliers, 
\newblock \emph{IEEE Transactions on Signal and Information Processing over
  Networks}, 4(1), 173--187, 2017.

\bibitem{sherson2017derivation}
T. Sherson, R. Heusdens and W. B. Kleijn, 
\newblock Derivation and analysis of the primal-dual method of multipliers
  based on monotone operator theory,
\newblock \emph{arXiv preprint arXiv:1706.02654}, 2017.

\bibitem{peaceman1955numerical}
D. W. Peaceman and H. H. Rachford, 
\newblock The numerical solution of parabolic and elliptic differential equations, 
\newblock \emph{Journal of the SIAM}, 3(1), 28--41, 1955.

\bibitem{passty1979ergodic}
G. B. Passty, 
\newblock Ergodic convergence to a zero of the sum of monotone operators in Hilbert space, 
\newblock \emph{Journal of Mathematical Analysis and Applications}, 72(2), 
383--390, 1979. 


  
\bibitem{davis2017three}
D. Davis and W. Yin, 
\newblock A three-operator splitting scheme and its optimization applications, 
\newblock \emph{Set-valued and variational analysis}, 25(4), 829--858, 2017.

\bibitem{bregman1967relaxation}
L. M. Bregman, 
\newblock The relaxation method of finding the common point of convex sets and
  its application to the solution of problems in convex programming, 
\newblock \emph{USSR Computational Mathematics and Mathematical Physics},
7(3), 200--217, 1967.

\bibitem{bauschke2003bregman}
H. H. Bauschke, M. Jonathan and P. L. Combettes, 
\newblock Bregman monotone optimization algorithms, 
\newblock \emph{SIAM Journal on control and optimization}, 
42(2), 596--636, 2003.

\bibitem{van2017forward}
Q. V. Nguyen, 
\newblock Forward--backward splitting with Bregman distances, 
\newblock \emph{Vietnam Journal of Mathematics, Springer},
45(3), 519--539, 2017. 


\bibitem{luenberger1973introduction}
D.~G.~Luenberger, 
\newblock Introduction to linear and nonlinear programming, 
\newblock \emph{Addison-Wesley publishing company}, 1973. 


\bibitem{battiti1992first}
R. Battiti, 
\newblock First-and second-order methods for learning: between steepest descent and Newton's method, 
\newblock \emph{Neural computation, MIT Press}, 4(2), 
141--166, 1992.





\bibitem{tyrrell1970convex}
R. T. Rockafellar,
\newblock \emph{Convex analysis}.
\newblock Princeton university press, 1970.

\bibitem{minty1960monotone}
G. J. Minty, 
\newblock Monotone networks, 
\newblock \emph{Proceedings of the Royal Society of London A: Mathematical,
  Physical and Engineering Sciences}, 257, 194--212, 1960.

\bibitem{tibshirani1996regression}
R. Tibshirani, 
\newblock Regression shrinkage and selection via the lasso, 
\newblock \emph{Journal of the Royal Statistical Society. Series B (Methodological)}, 
267--288, 1996.

\bibitem{boyd2004convex}
S. Boyd and L. Vandenberghe, 
\newblock \emph{Convex optimization}.
\newblock Cambridge University Press, 2004.


\bibitem{berinde2007iterative}
V. Berinde, 
\newblock \emph{Iterative approximation of fixed points}, 
\newblock Springer, 2007.

\bibitem{eckstein1993nonlinear}
J. Eckstein, 
\newblock Nonlinear proximal point algorithms using Bregman functions, with
  applications to convex programming, 
\newblock \emph{Mathematics of Operations Research}, 
18(1), 202--226, 1993.


\bibitem{tseng2000modified}
P. Tseng, 
\newblock A modified forward-backward splitting method for maximal monotone mappings, 
\newblock \emph{SIAM Journal on Control and Optimization}, 38(2), 
431--446, 2000. 



\bibitem{duchi2011adaptive}
J. Duchi, E. Hazan and Y. Singer, 
\newblock Adaptive subgradient methods for online learning and stochastic
  optimization, 
\newblock \emph{Journal of Machine Learning Research}, 
12, 2121--2159, 2011.





\bibitem{kingma2014adam}
D. Kingma and J. Ba, 
\newblock Adam: A method for stochastic optimization,
\newblock \emph{The International Conference on Learning Representations (ICLR
  2015)}, 2015.

\bibitem{tieleman2012lecture}
T. Tieleman and G. Hinton, 
\newblock Lecture 6.5-rmsprop: Divide the gradient by a running average of its
  recent magnitude, 
\newblock \emph{COURSERA: Neural Networks for Machine Learning}, 4(2), 
26--31, 2012.

\bibitem{rudin1992nonlinear}
L. I. Rudin, S. Osher and E. Fatemi, 
\newblock Nonlinear total variation based noise removal algorithms, 
\newblock \emph{Physica D: nonlinear phenomena}, Elsevier, 
60, 259--268, 1992.

\bibitem{zou2005regularization}
H. Zou and T. Hastie, 
\newblock Regularization and variable selection via the elastic net, 
\newblock \emph{Journal of the Royal Statistical Society: Series B (Statistical Methodology), Wiley Online Library}, 
67, 301--320, 2005. 

\bibitem{bertsekas1978local}
D. P. Bertsekas, 
\newblock Local convex conjugacy and Fenchel duality
\newblock \emph{IFAC Proceedings Volumes, Elsevier}, 
11, 1079--1084, 1978. 










  
\end{thebibliography}

\end{document}